\newtheorem{thm}{Theorem}[section]
\newtheorem{theorem}[thm]{Theorem}
\newtheorem{lemma}[thm]{Lemma}
\newtheorem{corollary}[thm]{Corollary}
\newtheorem{proposition}[thm]{Proposition}
\newcommand{\cO}{\mathcal O}
\newcommand{\cC}{\mathscr{C}}
\newcommand{\cZ}{\mathscr{Z}}
\newcommand{\LL}{\mathcal L}
\newcommand{\Spec}{{\rm Spec}\kern 2pt}
\newcommand{\Proj}{{\rm Proj}\kern 2pt}
\newcommand{\m}{\mathfrak m}
\newcommand{\Frac}{\mathrm{Frac}}
\newcommand{\Pic}{\mathrm{Pic}}
\newcommand{\Z}{\mathbb Z}
\newcommand{\PP}{\mathbb P}
\newcommand{\Q}{\mathbb Q}
\begin{document}

\title{\bf Global Weierstrass equations of hyperelliptic curves}
\author{Qing Liu}

\address{Universit\'e de Bordeaux, Institut de Math\'ematiques de 
Bordeaux, CNRS UMR 5251, 33405 Talence, France} 
\email{Qing.Liu@math.u-bordeaux.fr}

\subjclass[2020]{11G30, 11G05, 14D10, 14H25} 

\thanks{I would like to thank heartily Mohammad Sadek for asking the question
on the existence of a global minimal equation and for stimulating
discussions. I would like to thank Yuri Bilu, Jean-Fran\c cois Jaulent and
Jean Gillibert for helpful discussions about
Lemma~\ref{2-tor}. {I am grateful to one of the anonymous referees for
checking thoroughly the first version, finding mistakes and proposing
several important improvements, especially in the even genera case.}} 

\keywords{Hyperelliptic curves, discriminant, Weierstrass equation,
  good reduction}
 
\begin{abstract} Given a hyperelliptic curve $C$ of genus $g$ over a number field $K$ and
  a Weierstrass model $\cC$ of $C$ over the ring of integers $\cO_K$
  ({\it i.e.} the hyperelliptic involution of $C$ extends to $\cC$ and
  the quotient is a smooth model of $\PP^1_K$ over $\cO_K$),  
  we give necessary and sometimes sufficient conditions for $\cC$ to
  be defined by a global Weierstrass equation. In particular, if $C$
  has everywhere good reduction, we prove that it is defined by a
  global integral Weierstrass equation with invertible discriminant if the
  class number $h_K$ is prime to $2(2g+1)$, confirming a
  conjecture of M. Sadek. 
\end{abstract}
\maketitle

Let $C$ be a hyperelliptic curve of genus $g\ge 2$ or an elliptic curve
over a number field $K$. It is natural to try to represent $C$ with
an integral Weierstrass equation ({\it i.e.} with coefficients in the ring
of integers $\cO_K$ of $K$)
\begin{equation}
  \label{eq:integral-weq}
y^2+Q(x)y=P(x)   
\end{equation}
in a canonical way. For instance we may require the discriminant
of the equation to be \emph{minimal} in the sense that, at
any finite place $v$ of $K$, it divides the discriminant of any Weierstrass
equation of $C$ with coefficients in the local ring $\cO_{v}$
(Definition~\ref{minimal-df}). 
If $C$ has everywhere good reduction, this means that we are looking
for an integral Weierstrass equation with invertible discriminant. 
It is well known that such a minimal equation exists if $\cO_K$ is
principal, but this is not true in general (see e.g. \cite{Sil-84},
Corollary 6 or \cite{Stroeker}, Theorem 1.7, for elliptic curves).
     
The aim of this work is to study general conditions under which a
Weierstrass model, not necessarily minimal, is defined by  
an integral Weierstrass equation (Definition~\ref{integral-eq}). Let us describe
briefly the main results. We work more generally over a Dedekind
domain $R$ with field of fractions $K$. 
For a Weierstrass model $\cC$ of $C$, the quotient $\cZ$ of $\cC$ by the
hyperelliptic involution is a projective bundle over $S=\Spec R$.
For $\cC$ to be defined by an integral Weierstrass equation, there are
three necessary conditions, see Proposition~\ref{obvious}: 
\begin{enumerate}[{\rm (a)}] 
\item $\cZ$ must be trivial ({\it i.e.} $\cZ \simeq \PP^1_S$); 
\item the discriminant ideal $\Delta_{\cC} \subseteq R$ must
  be principal; 
\item if $\omega_{\cC/S}$ denotes the dualizing sheaf of
$\cC$ over $S$, then $\det H^0(\cC, \omega_{\cC/S})\simeq R$.
\end{enumerate}
The converse is true when $g$ is {{not divisible by $4$}} (see below), but is false 
in general (Proposition~\ref{examples-s}(d)).  
These three 
conditions are independent of each other (see \S~\ref{examples})
except when $g=1, 2$ or $4$ in which case $\Delta_{\cC}$ is
isomorphic to some power of
$\det H^0(\cC, \omega_{\cC/S})$ (Corollary~\ref{D-O}). 
\medskip

Roughly speaking, Condition (a) means there is a rational function $x$
for the integral Weierstrass equation \eqref{eq:integral-weq} potentially
representing $\cC$. This condition is automatically satisfied if
$\Pic(S)$ is finite of odd order (Proposition~\ref{prop:P1}) or if $\cC$ is a pointed
Weierstrass model (Proposition~\ref{pointed-main}) 
The existence of $y$ for the equation means that
the integral closure of $R[x]$ in the function fields $K(C)$ is
free over $R[x]$, with a basis $\{1, y\}$.
We define the \emph{Weierstrass class}
$[\mathfrak w]\in \Pic(S)$ associated to $\cC$ (Definition~\ref{w-c}) and
we prove that the existence of an integral
Weierstrass equation representing $\cC$ is equivalent to
the triviality of $[\mathfrak w]$ and {{$\cZ\simeq \PP^1_S$}} (Proposition~\ref{global-L}). 
\medskip

The main result of this work is the following theorem. Denote by
$f : \cC\to S$ the structure morphism. 

\begin{theorem}[See Theorem \ref{main}] 
 Let $\cC$ be a Weierstrass model of $C$ over $S$.  Then $\cC$ is
 defined by an integral Weierstrass equation under any of the following
 conditions: 
  \begin{enumerate}[{\rm (1)}] 
  \item $\Delta_{\cC}$ is principal, $\det f_*\omega_{\cC/S}$ is free and either 
    \begin{enumerate}
    \item $g$ is odd,  $\cZ\simeq \PP^1_S$ or 
    \item $g\equiv 2 \mod 4$; 
    \end{enumerate}
  \item $\Pic(S)$ is finite of odd order, 
    $\Delta_{\cC}$ is principal and 
    $\det f_*\omega_{\cC/S}$ is free; 
\item $\Pic(S)$ is finite of order prime to $2(2g+1)$ 
 and $\Delta_{\cC}$ is principal;
\item     $\Pic(S)$ is finite of order prime to $2g$  
and $\det f_*\omega_{\cC/S}$ is free. 
  \end{enumerate}
\end{theorem}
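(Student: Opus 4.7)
The criterion established earlier (Proposition \ref{global-L}) reduces the theorem, in each case, to verifying (i) $\cZ \simeq \PP^1_S$ and (ii) the Weierstrass class $[\mathfrak w] \in \Pic(S)$ is trivial. For (i), case (1)(a) assumes it outright; cases (2), (3) and (4) obtain it from Proposition \ref{prop:P1}, since $\Pic(S)$ has odd order in each (being prime to $2$) and the obstruction to trivialising a $\PP^1$-bundle over $S$ is $2$-torsion. For case (1)(b) I would show that freeness of $\det f_*\omega_{\cC/S}$, combined with $g \equiv 2 \pmod 4$, already forces the rank-$2$ bundle whose projectivisation is $\cZ$ to have trivial determinant and hence $\cZ \simeq \PP^1_S$.

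The substance of the proof lies in (ii), and the central input I would establish is a pair of relations in $\Pic(S)$, extending Corollary \ref{D-O} to all $g$, of the schematic form
\begin{equation*}
\Delta_{\cC} \cong [\mathfrak w]^{\alpha} \otimes (\det f_*\omega_{\cC/S})^{\beta}, \qquad (\det f_*\omega_{\cC/S})^{\delta} \cong [\mathfrak w]^{\gamma},
\end{equation*}
valid once $\cZ \simeq \PP^1_S$, with explicit small integers $\alpha,\beta,\gamma,\delta$ depending only on $g$. Heuristically, taking $x$ as a section of a line bundle $\LL$ on $S$ and $y$ as a section of $\mathfrak w \otimes \LL^{g+1}$ in a local equation $y^2 + Q(x)y = P(x)$, the coefficients of $P$ and $Q$ become global sections of explicit twists of $\mathfrak w$ and $\LL$; after $\cZ \simeq \PP^1_S$ lets one trivialise $\LL$, reading off the weights of the polynomial discriminant of $4P + Q^2$ and of the basis $x^i\,dx/(2y+Q)$, $0 \le i \le g-1$, of $f_*\omega_{\cC/S}$ produces the exponents. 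The numerical checks I need are that the exponent of $[\mathfrak w]$ extracted from the first relation divides $2(2g+1)$ and that $\gamma$ divides $2g$.

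Given these formulas, each case becomes an arithmetic manipulation. In case (1) both $\Delta_{\cC}$ and $\det f_*\omega_{\cC/S}$ are trivial, whence $[\mathfrak w]^{\alpha} = 0$; for the concrete $\alpha$ produced one deduces $[\mathfrak w]=0$ under the parity hypothesis on $g$ (the subcases (a) and (b) handling the dichotomy $g$ odd versus $g \equiv 2 \pmod 4$). Case (2) then follows from (1) once Proposition \ref{prop:P1} supplies (i). Case (3) uses the second relation to recover freeness of $\det f_*\omega_{\cC/S}$ from principality of $\Delta_{\cC}$ and the prime-to-$2(2g+1)$ hypothesis, reducing to (2); case (4) is symmetric, exchanging the roles of $\Delta_{\cC}$ and $\det f_*\omega_{\cC/S}$ and using the prime-to-$2g$ hypothesis together with the first relation.

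The main obstacle is pinning down the exponents $\alpha,\beta,\gamma,\delta$ precisely, and in particular doing the $2$-adic bookkeeping: this is what makes the argument sensitive to $g \bmod 4$ and explains the bifurcation of case (1) into (a) and (b), as well as the exact divisibility conditions $2(2g+1)$ and $2g$ in (3) and (4).
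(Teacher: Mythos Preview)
Your approach is the paper's: reduce via Proposition~\ref{global-L} to showing $\cZ\simeq\PP^1_S$ and $[\mathfrak w]=1$, and derive the latter from relations expressing $\Delta_{\cC}$ and $\det f_*\omega_{\cC/S}$ in terms of $[\mathfrak w]$. The paper's relations (Lemma~\ref{a-b-F-N}(2)) are simpler than you propose: both are \emph{pure} powers of $[\mathfrak w]$ (your $\beta=0$, $\delta=1$), namely $\Delta_{\cC}\simeq[\mathfrak w]^{4(2g+1)}$ and $\det f_*\omega_{\cC/S}\simeq[\mathfrak w]^{-g}$ for $g$ odd, with exponents $2(2g+1)$ and $-g/2$ for $g$ even. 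Crucially they hold for \emph{any} Weierstrass model, without first assuming $\cZ\simeq\PP^1_S$. This matters for (1)(b): the paper uses the underlying formula $\det f_*\omega_{\cC/S}\simeq\mathfrak a^{-g(g+1)/2}\mathfrak b^g$ (valid before trivialising $\cZ$) to see that $\mathfrak a$ is a \emph{square} in $\Pic(S)$ when $g\equiv 2\pmod 4$---not trivial as you write, but a square suffices for $\cZ\simeq\PP^1_S$ by Proposition~\ref{prop:P1}.

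There is one genuine slip: case (2) does \emph{not} reduce to case (1). Case (1) only covers $g$ odd or $g\equiv 2\pmod 4$, so your reduction fails when $4\mid g$. The paper argues (2) directly: from both $\Delta_{\cC}$ and $\det f_*\omega_{\cC/S}$ trivial one obtains in general only $[\mathfrak w]^2=1$ (Lemma~\ref{a-b-F-N}(3)), and it is the odd-order hypothesis on $\Pic(S)$ that kills the remaining $2$-torsion. Cases (3) and (4) are likewise handled directly rather than by reduction: e.g.\ in (3), $\Delta_{\cC}$ principal gives $[\mathfrak w]^{4(2g+1)}=1$, and order prime to $2(2g+1)$ finishes immediately.
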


In particular, for hyperelliptic curves having everywhere good
reduction over a number field, the next corollary  
answers positively a conjecture of Mohammad Sadek~\cite{Sad}.  It is
well known for elliptic curves. 

\begin{corollary}[See Corollary \ref{conj-Sadek}] Let
  $C$ be a hyperelliptic curve of genus $g$ over a number field $K$ having everywhere
  good reduction. If  the class number of $K$ is prime to $2(2g+1)$,
  then  $C$ is defined by an integral Weierstrass 
  equation with invertible discriminant. 
\end{corollary}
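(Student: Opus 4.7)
The plan is to reduce the corollary to case~(3) of Theorem~\ref{main} applied to $S = \Spec \cO_K$. The two ingredients required by that case are a Weierstrass model $\cC$ of $C$ over $S$ whose discriminant ideal $\Delta_\cC$ is principal, and the fact that $|\Pic(S)|$ is prime to $2(2g+1)$.

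For the first ingredient, I would translate the hypothesis of everywhere good reduction into the existence of such a model. At each finite place $v$ of $K$, good reduction supplies a smooth hyperelliptic model of $C$ over $\cO_v$, and hence a smooth Weierstrass model with discriminant a unit of $\cO_v$ (as remarked in the introduction, good reduction is the exact local condition that the Weierstrass discriminant be invertible). These local smooth Weierstrass models should glue to a global Weierstrass model $\cC$ of $C$ over $S$, either by faithfully flat descent or via the uniqueness of the smooth proper model of $C$ over $\cO_K$ together with the fact that the hyperelliptic involution extends and the quotient is a smooth $\PP^1$-bundle. The resulting $\Delta_\cC$ is the unit ideal $\cO_K$, which is principal. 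For the second ingredient, $\Pic(S)$ is the ideal class group of $K$, of order $h_K$, which by hypothesis is coprime to $2(2g+1)$.

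With both hypotheses of case~(3) of Theorem~\ref{main} satisfied, I invoke it to obtain an integral Weierstrass equation \eqref{eq:integral-weq} defining $\cC$. The discriminant of this equation generates the ideal $\Delta_\cC = \cO_K$, so it is a unit in $\cO_K$. In other words, $C$ is defined by an integral Weierstrass equation with invertible discriminant.

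The main obstacle I anticipate is not the class group bookkeeping or the application of the main theorem, both of which are immediate, but rather the initial passage from everywhere good reduction to the existence of a global Weierstrass model with unit discriminant ideal. This is a local-to-global statement that the introduction already treats as standard; I would expect it to be recorded in the paper's preliminary section on Weierstrass models, so that the present corollary is essentially a one-line consequence of Theorem~\ref{main}.
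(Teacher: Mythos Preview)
Your proposal is correct and follows the same route as the paper: the paper's proof of Corollary~\ref{conj-Sadek} takes the unique smooth projective model $\cC$ of $C$, observes that the hyperelliptic involution extends to it by uniqueness and that the quotient is smooth (so $\cC$ is a Weierstrass model with $\Delta_{\cC}=\cO_K$), and then applies Theorem~\ref{main}(3). This is exactly your second suggested mechanism for producing the global Weierstrass model; the local gluing alternative you mention is unnecessary.
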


Similar results are given in \S \ref{p-h} for pointed hyperelliptic curves
$(C, P)$ where $P$ is a rational Weierstrass point of $C$. Finally in
\S \ref{examples}, we construct  examples of $\cC$ satisfying various nice
properties but
not defined by integral (or pointed integral) Weierstrass equations. 
\medskip 

\noindent{\bf Settings} In all this paper, $R$ is a Dedekind domain,
$K=\Frac(R)$, $S=\Spec R$ and $C$ is a hyperelliptic curve over $K$ 
(that is, $C$ is smooth projective geometrically connected over $K$ and admits
a finite separable morphism $C\to \mathbb P^1_K$ of degree $2$) of genus $g\ge 2$
or an elliptic curve over $K$. 

\begin{section}{Twisted projective lines} \label{TPL} 

Let $\phi : \cZ\to S$ be a smooth projective scheme with generic fiber
isomorphic to  $\PP^1_K$.  
This is a \emph{twisted $\PP^1_S$} (see \cite{LK}, 3.2). It is the projective
bundle over $S$ defined by a locally free rank $2$ vector bundle on $S$.
\medskip 

{{In this section we study the sections of $\cZ$ and
also give a sufficient condition for $\cZ$ to be isomorphic to $\PP^1_S$.
We often consider sections of $\cZ$ as effective Cartier divisors on $\cZ$. Then
$\cO_{\cZ}(D)$ is a subsheaf of the function field $K(\cZ)$.}} 
\medskip

\begin{definition} \label{def:es}
  {Let $D\in \cZ(S)$ be a section. 
We will say that $D$ is \emph{elementary} 
if $\phi_*\cO_{\cZ}(D)$ is free over $\cO_S$.}
\end{definition}

\begin{lemma} \label{element-s} Let $D\in \cZ(S)$. Let $F=\phi_*\cO_{\cZ}(D)$.
  \begin{enumerate}[{\rm (1)}]
  \item The invertible sheaf $\cO_{\cZ}(D)$ is very ample, $F$ is locally free of rank $2$ over $\cO_S$, the morphism
    $i : \cZ\to \PP(F)$ associated to $\cO_{\cZ}(D)$ is an isomorphism and we have 
\[
i^*\cO_{\PP(F)}(1) \simeq \cO_{\cZ}(D). 
\]
  \item The following properties are equivalent:
    \begin{enumerate}
    \item $D$ is elementary;
    \item $\det F\simeq \cO_S$; 
      \item There exists an isomorphism $\theta : \cZ\to \PP^1_S$ such
        that  $\cO_{\cZ}(D)\simeq \theta^*\cO_{\PP^1_S}(1)$. 
  \end{enumerate}
\item Let $D'\in \cZ(S)$.  Then
    there exists a unique $M\in \Pic(S)$ such that
    $$\cO_{\cZ}(D')\simeq \cO_{\cZ}(D)\otimes \phi^*M.$$ 
    We then have
    \[
    \phi_*\cO_{\cZ}(D')\simeq F\otimes M, \quad
    \det \phi_*\cO_{\cZ}(D')\simeq (\det F)\otimes M^{\otimes 2}.
    \]  
\item For any $M\in \Pic(S)$, there is a section
  $D'\in \cZ(S)$ such that
  $$\cO_{\cZ}(D')\simeq \cO_{\cZ}(D)\otimes \phi^*M.$$  
  \end{enumerate}
\end{lemma}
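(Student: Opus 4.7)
The plan is to treat the four parts in sequence, with part (1) providing the projective-bundle machinery used throughout. For part (1), locally on $S$ the bundle $\cZ$ is isomorphic to $\PP^1$, so $\cO_{\cZ}(D)$ restricts to a degree-one, hence very ample, line bundle on each fibre with vanishing $H^1$. Cohomology and base change then gives that $F=\phi_*\cO_{\cZ}(D)$ is locally free of rank $2$ and commutes with base change; the surjectivity of $\phi^*F\to \cO_{\cZ}(D)$ (checked on fibres) defines $i:\cZ\to \PP(F)$ with $i^*\cO_{\PP(F)}(1)\simeq \cO_{\cZ}(D)$. On each fibre $i$ is the canonical isomorphism $\PP^1\xrightarrow{\sim}\PP(H^0(\PP^1,\cO(1)))$, so $i$ itself is an isomorphism.

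For part (2), the short exact sequence $0\to \cO_{\cZ}\to \cO_{\cZ}(D)\to \cO_{\cZ}(D)|_D\to 0$ pushes forward (using $R^1\phi_*\cO_{\cZ}=0$) to $0\to \cO_S\to F\to \cN\to 0$ with $\cN$ a line bundle necessarily isomorphic to $\det F$. Because $S$ is affine, $\operatorname{Ext}^1(\cN,\cO_S)=0$ and the sequence splits, giving $F\simeq \cO_S\oplus \det F$; this proves (a)$\Leftrightarrow$(b). The equivalence with (c) then follows from part (1) via $\PP(\cO_S\oplus \cO_S)\simeq \PP^1_S$, with the converse obtained by pushing $\theta^*\cO_{\PP^1_S}(1)$ forward. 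For part (3), $\cO_{\cZ}(D'-D)$ has relative degree $0$, so the adjunction map $\phi^*\phi_*\cO_{\cZ}(D'-D)\to \cO_{\cZ}(D'-D)$ is an isomorphism (it is one on each fibre); setting $M:=\phi_*\cO_{\cZ}(D'-D)$ gives existence, uniqueness follows from $\phi_*\phi^*M\simeq M$, and the pushforward and determinant formulas are immediate from the projection formula.

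Part (4), which I regard as the main obstacle, requires producing a section $D'$ from the bundle $M$ alone. Setting $\cL=\cO_{\cZ}(D)\otimes \phi^*M$, the pushforward $\phi_*\cL=F\otimes M$ is a rank-$2$ projective module over the Dedekind $R$; by the Steinitz structure theorem it splits as $\cO_S\oplus L'$ for some $L'\in \Pic(S)$. The unit of the first summand yields a global section $s$ of $\cL$ whose image in $(F\otimes M)\otimes k(v)$ is nonzero for every $v\in S$; by cohomology and base change, $s$ restricts to a nonzero section of the degree-$1$ line bundle $\cL|_{\cZ_v}$ on each $\cZ_v\simeq \PP^1_{k(v)}$, so its fibrewise zero scheme is a single reduced rational point. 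The delicate step is then to promote the effective Cartier divisor $\div(s)$ to a section of $\phi$: it is proper and quasi-finite over $S$ with every fibre of length $1$, hence finite flat of constant degree $1$ over the Dedekind base, hence an isomorphism $\div(s)\xrightarrow{\sim} S$. Taking $D':=\div(s)$ gives $\cO_{\cZ}(D')\simeq \cL$, which completes the proof.
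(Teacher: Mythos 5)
Your proposal is correct, and its overall architecture matches the paper's: part (1) by reduction to the fibres, part (2) via a splitting $F\simeq\cO_S\oplus\det F$, part (3) from the fact that a relatively degree-zero line bundle on $\cZ$ comes from the base, and part (4) from the structure of rank-$2$ projective modules over a Dedekind domain. The differences are in how the individual steps are justified, and they are real but modest. For (2) the paper gets the splitting from the Steinitz structure theorem over the Dedekind $R$, whereas you get it by pushing forward $0\to\cO_{\cZ}\to\cO_{\cZ}(D)\to\cO_{\cZ}(D)|_D\to 0$ and splitting the resulting extension by an invertible quotient; your route does not use the Dedekind hypothesis at all. For (3) the paper simply cites the computation $\Pic(\PP(F))\simeq\Z\times\Pic(S)$ (EGA II, 4.2.7), while you in effect reprove the relevant piece of it: $\cO_{\cZ}(D'-D)$ is fibrewise trivial, so by cohomology and base change the adjunction map $\phi^*\phi_*\cO_{\cZ}(D'-D)\to\cO_{\cZ}(D'-D)$ is an isomorphism; this is more self-contained at the cost of a page of standard base-change bookkeeping. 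For (4) both proofs start from the same algebraic input (a trivial, respectively $M^{\vee}$, sub-line-bundle of $F\otimes M$, respectively $F$, with invertible quotient); the paper then invokes EGA II.4.2.3 to read off the section and identifies $\cO_{\PP(F)}(-D')\otimes\cO_{\PP(F)}(1)\simeq\phi^*H$, whereas you take the global section $s$ of $\cL=\cO_{\cZ}(D)\otimes\phi^*M$ given by the trivial summand and show its zero divisor is a section. Your terse claim ``hence finite flat of constant degree $1$'' is correct but worth one more line: $\div(s)\to S$ is finite with all fibres (including the generic one) of length $1$, and any $R$-torsion in its structure sheaf would inject into some closed fibre on top of the rank-one flat quotient, forcing length $\ge 2$ there; so the structure sheaf is torsion-free, hence flat, and finite flat of degree $1$ is an isomorphism. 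With that sentence added, your argument is complete and a perfectly good alternative to the paper's citation-based finish.
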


\begin{proof} (1) This is true because it is true locally on $S$. 
\medskip 

(2) Obviously (c) $\Rightarrow$ (a) $\Rightarrow$ (b).
We have (b) $\Rightarrow$ (a) because $F\simeq \cO_S\oplus \det F$, 
$R$ being a Dedekind domain. 

(a) $\Rightarrow$ (c). Compose the 
isomorphism $i: \cZ\simeq \PP(F)$ with an isomorphism $\PP(F)\simeq \PP^1_S$ to get
the desired isomorphism $\theta$. Then $\theta^*\cO_{\PP^1_S}(1)\simeq \cO_{\cZ}(D)$ by the isomorphism of Part (1). 
\medskip

(3) For any noetherian connected scheme $T$ and any locally free
$\cO_T$-module $G$ of finite rank, it is known that there is a
canonical isomorphism
\begin{equation} \label{eq:picP} 
\Pic(\PP(G))\simeq \langle \cO_{\PP(G)}(1)\rangle \times
  \Pic(T)
\end{equation}
(see \cite{EGA}, II, Remarque 4.2.7, or \cite{GLL},
Proposition 8.4(a)). This implies the existence and uniqueness of $M$.   
The second part comes from the projection formula (\cite{L_B}, Proposition
5.2.32) and the fact that $F$ is locally free of rank $2$.
\medskip

(4) We work over $\PP(F)$ with $\cO_{\PP(F)}(1)\simeq\cO_{\PP(F)}(D)$.
Let $H=M^{\vee}$ be the dual of $M$. Using  
the structure of the projective modules over Dedekind domains,
$H$ can be injected into $F$ with
locally free quotient of rank 1. Then $H$ induces a section $D'$ of
$\PP(F)$ 
(\cite{EGA}, II.4.2.3). As an effective Cartier divisor on $\PP(F)$,
$D'$ is defined by the sheaf of homogeneous ideals $H\cO_{\PP(F)}$. 
One can check that $\cO_{\PP(F)}(-D')\otimes \cO_{\PP(F)}(1)\simeq\phi^*H$, thus 
the desired isomorphism. 
\end{proof}

\begin{example}
  \begin{enumerate}[{\rm (1)}] 
    \item If $R$ is a PID, then all sections of $\cZ$ are elementary. 
\item 
  {{On $\PP^1_S$ with a parameter $x$, a typical elementary section is the pole divisor $\mathrm{div}_{\infty}(x)$ of $x$ with
      $\cO_{\PP^1_S}(\mathrm{div}_{\infty}(x))\simeq \cO_{\PP^1_S}(1)$.}

   In view of Lemma~\ref{element-s}(2), when
  $\cZ$ has an elementary section $D$, we will often think about the pair 
$(\cZ, D)$ as $(\PP^1_S, \mathrm{div}_{\infty}(x))$ where $x$ is some parameter of $\PP^1_S$. }
\end{enumerate}
\end{example}

\begin{proposition}  \label{prop:P1}
We have $\cZ\simeq \PP^1_S$ if and only if for some 
$D\in \cZ(S)$, $\det\phi_*\cO_{\cZ}(D)$ is a square in $\Pic(S)$. 
In particular, if the map
$$\Pic(S)\to \Pic(S), \quad x\mapsto x^2$$ 
is surjective (e.g. if $\Pic(S)$ has finite and odd order), then
$\cZ\simeq \PP^1_S$.  
\end{proposition}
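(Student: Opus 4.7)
The plan is to use Lemma~\ref{element-s} as the workhorse. For the forward direction, if $\cZ\simeq \PP^1_S$, pick $D\in \cZ(S)$ to correspond to a hyperplane divisor under such an isomorphism; equivalently, pick $D$ elementary in the sense of Definition~\ref{def:es}. Then $F=\phi_*\cO_{\cZ}(D)$ is free of rank~$2$, so $\det F\simeq \cO_S$, which is trivially the square of the unit class in $\Pic(S)$.

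For the converse, suppose $\det\phi_*\cO_{\cZ}(D)\simeq N^{\otimes 2}$ for some section $D\in\cZ(S)$ and some $N\in\Pic(S)$. The strategy is to twist $D$ by a suitable line bundle so as to trivialise the determinant. Applying Lemma~\ref{element-s}(4) with $M=N^{-1}$, one obtains a section $D'\in\cZ(S)$ with $\cO_{\cZ}(D')\simeq \cO_{\cZ}(D)\otimes \phi^*N^{-1}$. Then by Lemma~\ref{element-s}(3),
\[
\det\phi_*\cO_{\cZ}(D')\simeq (\det F)\otimes (N^{-1})^{\otimes 2}\simeq N^{\otimes 2}\otimes N^{\otimes -2}\simeq \cO_S,
\]
so $D'$ is elementary and Lemma~\ref{element-s}(2) delivers the desired isomorphism $\cZ\simeq \PP^1_S$.

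For the last assertion, one needs first that $\cZ(S)\neq \emptyset$ so that some $D$ is available to feed into the above reasoning. Since $\cZ=\PP(E)$ for a rank $2$ locally free $\cO_S$-module $E$, and any such $E$ over a Dedekind domain splits as a direct sum of invertible sheaves, the projectivisation has a section. If the squaring map on $\Pic(S)$ is surjective, then $\det\phi_*\cO_{\cZ}(D)$ is automatically a square and the main statement applies; and when $\Pic(S)$ is a finite abelian group of odd order it has trivial $2$-torsion, so $x\mapsto x^2$ is injective and hence surjective. I do not anticipate any real obstacle: all the substantive content sits in Lemma~\ref{element-s}, and what remains is essentially bookkeeping in the Picard group.
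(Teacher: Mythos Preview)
Your argument is correct. Both you and the paper reduce to the key observation that one may twist so that the determinant of the rank-$2$ pushforward becomes trivial, whence the bundle is free over the Dedekind base and $\cZ\simeq\PP^1_S$. The only organizational difference is that the paper carries out this twist at the level of vector bundles: writing $\cZ\simeq\PP(F)$ via Lemma~\ref{element-s}(1), it invokes the separate Lemma~\ref{lem:iso-pb} to say $\PP(F)\simeq\PP^1_S$ iff $F\simeq M\oplus M$, and then replaces $F$ by $F\otimes M^{\vee}$. You instead stay at the level of sections, using Lemma~\ref{element-s}(4) to produce a new section $D'$ with trivial determinant and then applying Lemma~\ref{element-s}(2) directly. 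Your route is marginally more economical in that it bypasses Lemma~\ref{lem:iso-pb} entirely; the paper's route has the advantage of making the underlying vector-bundle picture explicit. Your handling of the final assertion (existence of a section via the splitting of rank-$2$ bundles over a Dedekind domain) is also fine and matches what the paper uses implicitly.
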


\begin{proof} Let $D\in \cZ(S)$ and $F=\phi_*\cO_{\cZ}(D)$. We have
  $\cZ\simeq \PP(F)$ by Lemma~\ref{element-s}(1). 
  By Lemma~\ref{lem:iso-pb} below, 
$\cZ\simeq \PP^1_S$ if and only if $F\simeq M\oplus M$ for some
$M\in \Pic(S)$. The existence of such an isomorphism
implies that $\det F \simeq M^{\otimes 2}$ is a
square in $\Pic(S)$. Conversely, if $\det F\simeq M^{\otimes 2}$ for
some $M\in \Pic(S)$, 
replacing $F$ by $F\otimes M^{\vee}$ (which does not change
$\PP(F)$), we can suppose that $\det F \simeq \cO_S$. 
Therefore $F$ is free (Lemma~\ref{element-s}(2)) and $\cZ\simeq \PP^1_S$. 
\end{proof}

\begin{lemma} \label{lem:iso-pb} 
Let $F, G$ be two locally free finite rank sheaves on a noetherian scheme $T$. 
Then the projective bundles $\PP(F)$ and $\PP(G)$ are $T$-isomorphic if
and
only if $F\simeq G\otimes_{\cO_T} M$ for some $M\in \Pic(T)$.    
\end{lemma}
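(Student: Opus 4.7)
The plan is to prove both implications, with the forward one being straightforward and the reverse one resting on the Picard decomposition \eqref{eq:picP}. I may assume $T$ is connected (otherwise work componentwise) and that $F$ and $G$ share a common constant rank $r$, since $\PP(F)\simeq \PP(G)$ forces the fiber dimensions to agree.

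For the easy direction, suppose $F \simeq G \otimes M$ with $M \in \Pic(T)$. The canonical graded identification
\[
\mathrm{Sym}(G \otimes M) \simeq \bigoplus_{n \ge 0} \mathrm{Sym}^n(G) \otimes M^{\otimes n},
\]
with $M^{\otimes n}$ sitting in degree $n$, induces an isomorphism of Proj schemes $\PP(F) \simeq \PP(G)$ over $T$.

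For the converse, let $\psi : \PP(F) \to \PP(G)$ be a $T$-isomorphism with structure maps $\pi_F, \pi_G$, so that $\pi_G \circ \psi = \pi_F$. I would study $\LL := \psi^*\cO_{\PP(G)}(1)$ on $\PP(F)$: by \eqref{eq:picP} it is uniquely of the form
\[
\LL \simeq \cO_{\PP(F)}(n) \otimes \pi_F^* N
\]
for some $n \in \Z$ and $N \in \Pic(T)$. Restricting to a closed fiber of $\pi_F$ over $t \in T$ realizes $\psi$ as an isomorphism $\PP^{r-1}_{\kappa(t)} \to \PP^{r-1}_{\kappa(t)}$, which must send $\cO(1)$ to $\cO(1)$; this forces $n = 1$. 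Applying $\pi_{F*}$, the projection formula combined with the standard identity $\pi_{F*}\cO_{\PP(F)}(1) \simeq F$ (cf.\ Lemma~\ref{element-s}(1)) yields
\[
G \simeq \pi_{G*}\cO_{\PP(G)}(1) \simeq \pi_{F*}\LL \simeq F \otimes N,
\]
so setting $M := N^{\vee}$ gives $F \simeq G \otimes M$ as desired.

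The only step demanding actual thought is the fiber-by-fiber calculation pinning down $n = 1$; the rest is a bookkeeping exercise with the projection formula and \eqref{eq:picP}.
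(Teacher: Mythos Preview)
Your argument is correct and follows essentially the same route as the paper's: both rely on the decomposition \eqref{eq:picP} of $\Pic(\PP(G))$ together with the projection formula and the identification $\pi_{F*}\cO_{\PP(F)}(1)\simeq F$. Your version is slightly more explicit in justifying $n=1$ via restriction to a fibre (the paper asserts this without comment), and your cross-reference to Lemma~\ref{element-s}(1) should really point to a general source such as \cite{SP}, Example 27.21.3, since that lemma is stated only for rank-$2$ bundles over $S$.
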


\begin{proof} The if part is a general fact (\cite{EGA}, II.4.1.4). To prove
  the converse, we can suppose that $T$ is connected. We have a
  commutative diagram 
$$ 
\xymatrix{
\PP(F) \ar[rr]^{\sigma} \ar[rd]_{q} &  & \PP(G) \ar[ld]^{p} \\ 
& T & }
$$
with an isomorphism $\sigma$. Using the structure of $\Pic(\PP(G))$ 
(see Isomorphism \eqref{eq:picP} in the proof of Lemma~\ref{element-s}(3)), 
there exists a $M\in \Pic(T)$ such that 
$\sigma_*\cO_{\PP(F)}(1)  = \cO_{\PP(G)}(1)\otimes p^*M$. Taking 
$p_*$ and using the projection formula (\cite{L_B}, Proposition
5.2.32),
we find 
$$q_* \cO_{\PP(F)}(1) = p_*\cO_{\PP(G)}(1)\otimes M.$$ 
But $q_* \cO_{\PP(F)}(1) \simeq F$ (\cite{SP}, Example 27.21.3) and 
$p_*\cO_{\PP(G)}(1)\simeq G$, the only if part is proved. 
\end{proof}

\end{section}

\begin{section}{Weierstrass models and discriminant ideals}

  In this section we gather some basic definitions and facts on Weierstrass
  equations and Weierstrass models. 
  \medskip

A \emph{model of $C$ over $S$} is a normal projective scheme
$f: \cC \to S$
whose generic fiber $\cC_K$ is isomorphic to $C$. 
A model $\cC$ is a \emph{Weierstrass model} if the hyperelliptic
involution on $C$ extends to an $S$-involution
$\sigma$ on $\cC$ and if the quotient
$$\cZ:=\cC/\langle \sigma \rangle$$
is smooth over $S$. The latter is then a smooth model of $\PP^1_K$ over
$S$. Equivalently, a Weierstrass model is the
normalization of a smooth model $\cZ$ of $\PP^1_K$ in the
canonical degree $2$ morphism $C\to \PP^1_K$. 
\medskip 

\begin{definition}\label{integral-eq} 
A \emph{Weierstrass equation of $C$} is 
an equation of the form
\begin{equation}
  \label{eq:def-Weq}
    y^2+Q(x)y=P(x), \quad P(x), Q(x)\in K[x]
\end{equation}
with $\deg Q(x)\le g+1, \deg P(x) \le 2g+2$ such that the
corresponding affine curve is isomorphic to an open subscheme of $C$. 
Such an equation is said to be \emph{integral over $R$} if moreover $P(x),
Q(x)\in R[x]$. If an integral Weierstrass equation \eqref{eq:def-Weq}
of $C$ defines a normal affine scheme 
\begin{equation} \label{eq:aff-W} 
\Spec R[x,y]/(y^2+Q(x)y-P(x)) 
\end{equation}
(see \cite{L_Tr}, \S 3, Lemme 2 for a criterion of normality), then
embedding $\Spec R[x]$ in $\PP^1_S$ and taking the
normalization of $\PP^1_S$ in the degree $2$ morphism $C\to \PP^1_K$
will give a Weierstrass model $\cC$. Such a Weierstrass model is said
to be \emph{defined by the integral Weierstrass equation} \eqref{eq:def-Weq}. 
{The scheme $\cC$ is then obtained by glueing the two affine schemes: 
$$\Spec R[x,y]/(y^2+Q(x)y-P(x)), \  
\Spec R[t,z]/(z^2+t^{g+1}Q(1/t)z-t^{2g+2}P(1/t))$$
via the identifications $t=1/x$, $z=y/x^{g+1}$.}
\end{definition}

\begin{remark}
Roughly speaking, for $\cC$ to be defined by an integral Weierstrass equation, there
are two conditions: $\cZ\simeq \PP^1_S$ (there exists an $x$) and the
quotient by $R[x]$ of the integral closure of $R[x]$ in $K(C)$ is free
(there exists an $y$).
\end{remark}

\begin{lemma} \label{loc-W} Suppose that $R$ is a PID.   Then all 
  Weierstrass models are defined by integral Weierstrass equations. 
\end{lemma}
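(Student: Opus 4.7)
The plan is to read off an integral Weierstrass equation from the sheaf-theoretic structure of $f_*\cO_\cC$ on $\cZ$, where $f : \cC \to \cZ$ denotes the quotient by the hyperelliptic involution. Since $\Pic(S) = 0$, Proposition~\ref{prop:P1} immediately gives $\cZ \simeq \PP^1_S$; fix such an isomorphism so that $\cZ = \Spec R[x] \cup \Spec R[t]$ with $t = 1/x$. Because $\cC$ is normal of dimension at most $2$ it is Cohen-Macaulay, and miracle flatness (target regular, source CM, fibers of constant dimension $0$) makes $f$ finite flat of degree $2$, with $f_*\cO_\cC$ locally free of rank $2$ on $\PP^1_S$.

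The central step will be to establish the identification $f_*\cO_\cC \simeq \cO_{\PP^1_S} \oplus \cO_{\PP^1_S}(-(g+1))$. The inclusion $\cO_\cZ \hookrightarrow f_*\cO_\cC$ sends $1$ to a nonzero vector in each residue-field fiber, hence is a locally split subbundle, and the quotient $\mathcal F$ is a line bundle on $\PP^1_S$. An Euler characteristic computation on the generic fiber (using $\chi(C, \cO_C) = 1 - g$ and $\chi(\cO_{\PP^1_K}) = 1$) yields $\deg \mathcal F_K = -(g+1)$, and since $\Pic(\PP^1_S) = \Z$ (because $\Pic(S) = 0$) this identifies $\mathcal F \simeq \cO_{\PP^1_S}(-(g+1))$ globally. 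The extension then splits because $\mathrm{Ext}^1(\mathcal F, \cO_\cZ) = H^1(\PP^1_S, \cO(g+1)) = 0$ for $g \ge 0$.

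Granted the splitting, on $U = \Spec R[x]$ one obtains $f^{-1}(U) = \Spec A$ with $A = R[x] \oplus R[x]\cdot y$, where $y$ is a local trivialization of the summand $\cO(-(g+1))|_U$. Since $y^2 \in A$, there exist unique $Q, P \in R[x]$ with $y^2 + Q(x) y = P(x)$. Writing $z$ for the corresponding trivialization on $\Spec R[t]$, the transition function of $\cO(-(g+1))$ on the overlap forces $y = x^{g+1} z$ after absorbing a unit of $R^\times$ and adjusting the splitting by an element of $\cO_\cZ$. Substituting $y = x^{g+1} z$ into the equation and dividing by $x^{2(g+1)}$ gives $z^2 + (t^{g+1} Q(1/t)) z = t^{2(g+1)} P(1/t)$, and requiring these coefficients to lie in $R[t]$ (which they must, by the analogous decomposition over $\Spec R[t]$) is exactly the degree constraints $\deg Q \le g+1$ and $\deg P \le 2g+2$. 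The two affine descriptions then patch via $t = 1/x$, $z = y/x^{g+1}$ precisely as in Definition~\ref{integral-eq}, so $\cC$ is defined by this integral Weierstrass equation.

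The main obstacle is the global decomposition $f_*\cO_\cC \simeq \cO \oplus \cO(-(g+1))$ on $\PP^1_S$: verifying that the quotient is truly a line bundle (not just torsion-free) and that the relevant $\mathrm{Ext}^1$ vanishes. Once both are in place, extracting the Weierstrass equation and verifying the degree bounds is essentially a coordinate computation dictated by the known transition function of $\cO(-(g+1))$.
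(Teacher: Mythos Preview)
Your argument is correct and follows the same overall strategy as the paper: identify $\cZ\simeq\PP^1_S$, show that $\pi_*\cO_{\cC}\simeq \cO_{\PP^1_S}\oplus\cO_{\PP^1_S}(-(g+1))$, and read off the equation (with the degree bounds coming from the transition on the other chart). The difference lies in how the quotient line bundle $\LL=\pi_*\cO_{\cC}/\cO_{\cZ}$ is identified and the extension split. The paper's proof is a forward reference: it invokes Step~(1) of Proposition~\ref{global-L}, which in turn rests on Lemma~\ref{lem:L}, where $\LL$ is computed \emph{locally} from already-known local Weierstrass equations and the splitting is read off explicitly; the vanishing of $\Pic(S)$ then forces the local pieces to globalize. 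Your route instead computes $\LL$ globally from scratch: miracle flatness makes $\pi_*\cO_{\cC}$ locally free, the unit section shows $\LL$ is a line bundle, an Euler-characteristic count on the generic fibre gives its degree, and $\Pic(\PP^1_S)=\Z$ (since $\Pic(S)=0$) pins it down as $\cO(-(g+1))$; the splitting then follows from $H^1(\PP^1_S,\cO(g+1))=0$.

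What this buys you is self-containment: the paper's proof of Lemma~\ref{loc-W} ultimately leans on the ``well known'' local case (over a DVR) that it is ostensibly proving, since Lemma~\ref{lem:L} itself cites Lemma~\ref{loc-W}. Your argument avoids that loop at the cost of a bit more cohomological machinery. One small point: once you have fixed a \emph{global} splitting of $\pi_*\cO_{\cC}$, the local trivializations $y$ and $z$ of the $\cO(-(g+1))$ summand over the two charts are related exactly by the transition function $x^{g+1}$ (up to a unit of $R^\times$), so no further ``adjusting the splitting by an element of $\cO_{\cZ}$'' is needed; that phrase can simply be dropped.
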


\begin{proof}
This is well known. One can for instance argue as follows: by 
Proposition~\ref{prop:P1}, $\cZ\simeq \PP^1_S$. Choose any
(automatically elementary) section $D\in \cZ(S)$ and 
{{argue as in the proof of}} Proposition~\ref{global-L}.  
\end{proof}

\begin{definition} \label{minimal-df} 
  Now we define the discriminant ideal and minimal Weierstrass models.
Let $\cC$ be a Weierstrass model over $S$. For
any closed point $s\in S$, $\cC\times_S \Spec \cO_{S,s}$ is then
defined by a Weierstrass equation over the PID $\cO_{S,s}$. The
discriminant of the equation {{(\cite{L_Tr}, \S 2)}} generates an ideal $\Delta_s$ of
$\cO_{S,s}$
independent on the choice of the equation. We have $\Delta_s=\cO_{S,s}$ if
and only if $\cC$ is smooth at $s$.  
The \emph{discriminant ideal $\Delta_{\cC}$ of $\cC$} is the ideal
$\prod_s \Delta_s$. We have $\Delta_\cC=\cO_S$ if and only if $\cC$ is
smooth over $S$ (then $C$ has everywhere good reduction). 
We say that $\cC$ is \emph{a minimal Weierstrass model}
if for all $s$, $\Delta_s$ divides the discriminant of any integral
Weierstrass equation of $C$ over $\cO_{S,s}$. 
\end{definition}

\begin{remark}\label{minimal-ex} The curve $C$ always has minimal
  Weierstrass models. Indeed, $C$ has a smooth projective model
  over a dense open subset $U$ of $S$. Then one can glue this model
  with minimal Weierstrass models at the points of $S\setminus U$. 
Note that in general the minimal Weierstrass models are not unique
(unless $C$ has good reduction everywhere). However, up to
$S$-isomorphisms, there are only finitely many such 
models for a given $C$ (use \cite{L_Tr}, \S 8, Corollary 4). 
\end{remark}

Given a Weierstrass model $\cC$, there are some natural conditions for
$\cC$ to be defined by an integral Weierstrass equation. As
$f : \cC\to S$ is locally a complete intersection, its
dualizing sheaf $\omega_{\cC/S}$ is invertible and $f_*\omega_{\cC/S}$ is
locally free of rank $g$ over $S$.

\begin{proposition} \label{obvious}
  If a Weierstrass model $\cC$ is defined by an integral
  Weierstrass equation, then
\begin{enumerate}[{\rm (a)}] 
  \item  $\cZ \simeq \PP^1_S$;
  \item $\Delta_{\cC}$ is a principal ideal of $R$; 
  \item $\det f_*\omega_{\cC/S}$ is free.  
\end{enumerate}
\end{proposition}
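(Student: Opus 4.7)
The three conditions can all be extracted directly from the explicit description of $\cC$ at the end of Definition~\ref{integral-eq}, as the gluing of the two affine schemes
$$\Spec R[x,y]/(y^2+Q(x)y-P(x)), \quad \Spec R[t,z]/(z^2+t^{g+1}Q(1/t)z-t^{2g+2}P(1/t))$$
via $t=1/x$, $z=y/x^{g+1}$. The hyperelliptic involution $\sigma$ acts by $y\mapsto -y-Q(x)$ on the first chart and $z\mapsto -z-t^{g+1}Q(1/t)$ on the second, and its ring of invariants is visibly $R[x]$ (resp.\ $R[t]$); this gives part (a), since $\cZ$ is then $\Spec R[x]\cup \Spec R[t]$ glued along $t=1/x$, which is $\PP^1_S$.

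For (b), let $\Delta\in R$ be the discriminant of the chosen equation. Localizing at any closed point $s$, the same equation is an integral Weierstrass equation of $C$ over the PID $\cO_{S,s}$, hence $\Delta_s$ is generated by the image of $\Delta$. Therefore $\Delta_{\cC}=\prod_s \Delta_s = \Delta\cdot R$ is principal.

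For (c), the plan is to exhibit an explicit free $R$-basis of $H^0(\cC,\omega_{\cC/S})$. Since $f$ is a local complete intersection, the dualizing sheaf is invertible, freely generated on the first chart by $\eta_1 := dx/(2y+Q(x))$ and on the second by $\eta_2 := dt/(2z+t^{g+1}Q(1/t))$. A short computation using $t=1/x$ and $z=y/x^{g+1}$ gives, on the overlap, the identity
$$\eta_2 = -x^{g-1}\,\eta_1.$$
From this, a rational global section of $\omega_{\cC/S}$, written uniquely as $p(x)\eta_1$ with $p(x)\in K[x]$, extends to a regular section on all of $\cC$ iff $p\in R[x]$ and $\deg p\le g-1$. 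Consequently $\{x^i\eta_1\}_{0\le i\le g-1}$ is a free $R$-basis of $f_*\omega_{\cC/S}$, and $\det f_*\omega_{\cC/S}\simeq R$.

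All three parts are immediate from the explicit cover once the formulas are in place; the only step requiring any actual calculation is the change-of-variables identity $\eta_2=-x^{g-1}\eta_1$ at infinity, which is a routine differentiation using the gluing relations. I do not expect a serious obstacle.
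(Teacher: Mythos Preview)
Your proof is correct and follows essentially the same route as the paper's own argument: the paper dismisses (a) and (b) as clear and for (c) cites the standard fact (as in \cite{L_B}, Proposition~7.4.26) that $f_*\omega_{\cC/S}$ is free on the differentials $x^i\,dx/(2y+Q(x))$, $0\le i\le g-1$. You simply unpack these ``clear'' steps---computing the invariant ring for (a), localizing for (b), and verifying the gluing relation $\eta_2=-x^{g-1}\eta_1$ for (c)---so the two arguments coincide. One small point: in (c) you assert that a global section can be written as $p(x)\eta_1$ with $p\in K[x]$; strictly speaking a section on the first chart has the form $(a(x)+b(x)y)\eta_1$, and you are implicitly using that global sections inject into $H^0(C,\omega_C)$, where the $y$-part is known to vanish. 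This is standard and not a real gap.
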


\begin{proof} The first two conditions are clear. If $\cC$ is defined
  by an integral Weierstrass equation~\eqref{eq:def-Weq}, then one can
  check exactly as over a ground field (see {\it e.g.}  \cite{L_B}, Proposition
  7.4.26) that $f_*\omega_{\cC/S}$ is free over $R$ with 
  $$\dfrac{x^idx}{2y+Q(x)}, \quad 0\le i \le g-1$$ 
  as a basis. So $\det f_*\omega_{\cC/S}$ is free.  
\end{proof}
\end{section}

\begin{section}{Weierstrass class of a Weierstrass model}
  
We fix a Weierstrass model $\cC$ and we look for conditions ensuring
it is defined by an integral Weierstrass equation. 
Recall that $f: \cC\to S$ is the structure morphism. Denote by 
$\pi: \cC \to \cZ$ the quotient morphism by the hyperelliptic
involution and by $\phi: \cZ\to S$ the structure morphism:
$$ 
\xymatrix{
\cC \ar[rr]^{\pi} \ar[rd]_{f} &  & \cZ \ar[ld]^{\phi} \\ 
& S. & }
$$
The
morphism $\pi$ is finite, locally free of degree $2$, 
and we have an exact sequence  
\begin{equation}
  \label{eq:L}
  0 \to \cO_{\cZ} \to \pi_*\cO_{\cC} \to \LL:=\pi_*\cO_{\cC}/\cO_{\cZ} \to 0. 
\end{equation}

\begin{lemma} \label{lem:L}Let $D\in \cZ(S)$ be a section of $\cZ$ over $S$.
Let $\LL$ be as above. 
  Then there exists a unique $N\in \Pic(S)$, depending on $D$, such that
\begin{equation} \label{eq:LN} 
\LL \otimes \cO_{\cZ}((g+1)D)\simeq \phi^*N.
\end{equation}
\end{lemma}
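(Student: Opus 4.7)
The plan is to reduce this to the structure of $\Pic(\cZ)$ established in Lemma~\ref{element-s}, after checking that $\LL$ is an invertible sheaf and computing its ``degree'' along a fiber of $\phi$.

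First I would verify that $\LL$ is an invertible $\cO_{\cZ}$-module. Since $\pi : \cC \to \cZ$ is finite locally free of degree $2$ (because $\cZ$ is smooth, $\cC$ is normal and $\pi$ is the quotient by an involution giving a smooth base), the sheaf $\pi_*\cO_{\cC}$ is locally free of rank $2$ over $\cO_{\cZ}$. Locally on $\cZ$ one can split off the unit subsheaf (étale locally or directly since $2 \in \cO_{\cZ}$ up to the Kummer normalization handled in \S 3 of \cite{L_Tr}), so the quotient $\LL$ is locally free of rank $1$, i.e.\ invertible.

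Next I would invoke Lemma~\ref{element-s}(1) to identify $\cZ$ with $\PP(F)$ where $F = \phi_*\cO_{\cZ}(D)$, and then apply the isomorphism \eqref{eq:picP}:
\[
\Pic(\cZ) \simeq \langle \cO_{\cZ}(D) \rangle \times \phi^*\Pic(S).
\]
In particular there exist a unique integer $n$ and a unique class $N \in \Pic(S)$ with
\[
\LL \otimes \cO_{\cZ}((g+1)D) \simeq \cO_{\cZ}(nD) \otimes \phi^*N.
\]
Uniqueness of the statement follows directly from the uniqueness in \eqref{eq:picP} combined with injectivity of $\phi^*$, so once I show $n = 0$ I am done.

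To pin down $n$ I would pass to the generic fiber. On $\cZ_K \simeq \PP^1_K$ the morphism $\pi_K : C \to \PP^1_K$ is the canonical degree $2$ cover of the hyperelliptic curve $C$, and it is a standard computation (e.g.\ via the Hurwitz formula applied to $\pi_*\cO_C = \cO_{\PP^1_K} \oplus \LL|_{\cZ_K}$, using $\deg \omega_C = 2g-2$ and $\omega_{\PP^1_K} = \cO(-2)$) that
\[
\LL|_{\cZ_K} \simeq \cO_{\PP^1_K}(-(g+1)).
\]
Since $D$ restricts to a divisor of degree $1$ on the generic fiber, the restriction of $\LL \otimes \cO_{\cZ}((g+1)D)$ to $\cZ_K$ is trivial. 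Comparing with $\cO_{\cZ}(nD) \otimes \phi^*N$ restricted to $\cZ_K$, which has degree $n$, forces $n = 0$. This gives the desired isomorphism and completes the proof. The only point that requires some care is the generic fiber computation of $\LL|_{\cZ_K}$; everything else is a formal consequence of the Picard-group decomposition of a projective bundle already recorded in the paper.
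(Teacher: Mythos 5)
Your argument is correct, but it follows a genuinely different route from the paper. You work globally: identify $\cZ\simeq\PP(F)$ with $\cO_{\PP(F)}(1)\simeq\cO_{\cZ}(D)$, invoke the decomposition $\Pic(\cZ)\simeq\langle\cO_{\cZ}(D)\rangle\times\phi^*\Pic(S)$, and then kill the $\cO_{\cZ}(nD)$-component by a degree count on the generic fiber, using $\pi_{K*}\cO_C\simeq\cO_{\PP^1_K}\oplus\cO_{\PP^1_K}(-(g+1))$. The paper instead argues locally over $S$: it covers $S$ by affine opens $U$ over which $\cC_U$ is given by an integral Weierstrass equation with $D|_{\cZ_U}$ the pole divisor of $x_U$, and computes directly that $\pi_*\cO_{\cC}|_{\cZ_U}=\cO_{\cZ_U}\oplus\cO_{\cZ_U}(-(g+1)D)\,y_U$, so that $\LL\otimes\cO_{\cZ}((g+1)D)$ is trivial over each $\cZ_U$ and hence of the form $\phi^*N$; uniqueness is read off from $N\simeq\phi_*\phi^*N$. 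Your approach is shorter and more structural, but the paper's explicit computation buys something extra: the local generators $y_U$ and the identification $\LL|_{\cZ_U}\simeq\cO_{\cZ}(-(g+1)D)|_{\cZ_U}$ are reused verbatim later (in Lemma~\ref{a-b-F-N} and in Steps (1)--(2) of Proposition~\ref{global-L}), where one needs to know how $N$ is generated locally, not just that it exists. One caveat in your write-up: the justification that $\LL$ is invertible by ``splitting off the unit subsheaf \dots since $2\in\cO_{\cZ}$'' does not work at residue characteristic $2$ (no trace splitting is available there, and \S 3 of \cite{L_Tr} is about normality criteria, not such a splitting). The claim itself is still true and easy: $\pi_*\cO_{\cC}$ is locally free of rank $2$ and the image of $1$ in each fiber $\pi_*\cO_{\cC}\otimes k(z)$ is the unit of a nonzero $k(z)$-algebra, hence nonzero, so by Nakayama $1$ is part of a local basis and the quotient $\LL$ is invertible; alternatively it follows from the paper's local Weierstrass computation. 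With that one-line repair your proof is complete.
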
 

\begin{proof} For any $s\in S$, we let $U\ni s$ be an affine open
  neighborhood such that $\cC_U$ is defined by an integral Weierstrass equation
  (Lemma~\ref{loc-W})
  $$ y^2_U+Q_U(x_U)y_U=P_U(x_U)$$
  (thus $\cZ_U\simeq \PP^1_U$ with parameter $x_U$) {{and that $D|_{\cZ_U}$ is the pole divisor of  $x_U$.}} 
  Consider the covering of $\cZ_U$ by
  $$W:=\Spec  \cO_{S}(U)[x_U], \quad V:=\Spec \cO_{S}(U)[1/x_U].$$  
Then 
$$(\pi_*\cO_{\cC})(W)=\cO_{\cZ}(W) \oplus \cO_{\cZ}(W)y_U 
=\cO_{\cZ}(W) \oplus \cO_{\cZ}(-(g+1)D)(W)y_U$$
and  
$$(\pi_*\cO_{\cC})(V)=\cO_{\cZ}(V) \oplus  \cO_{\cZ}(V)(y_U/x_U^{g+1})
=\cO_{\cZ}(V) \oplus \cO_{\cZ}(-(g+1)D)(V)y_U.$$
Therefore 
$$\pi_*\cO_{\cC}|_{\cZ_U}=\cO_{\cZ_U} \oplus  \cO_{\cZ_U}(-(g+1)D)y_U$$
and 
$$\LL|_{\cZ_U} \simeq \cO_{\cZ}(-(g+1)D)|_{\cZ_U}.$$
This proves the existence of $N$. The uniqueness of $N$ up to isomorphism
comes from the fact that $N\simeq \phi_*\phi^* N$. 
\end{proof}

\begin{remark}[Dependence on $D$]  \label{change-D} Denote by $F=\phi_*\cO_{\cZ}(D)$. 
 Let $D'\in \cZ(S)$ be another section with $F'=\phi_*\cO_{\cZ}(D')$
 and corresponding $N'\in \Pic(S)$ as in   Lemma~\ref{lem:L}. Then by
  Lemma~\ref{element-s}(3), 
  $\cO_{\cZ}(D')\simeq \cO_{\cZ}(D)\otimes \phi^* M$ for a unique $M\in \Pic(S)$
and we have 
$$ N'\simeq N\otimes M^{\otimes (g+1)}, \quad 
\det F' \simeq (\det F)\otimes M^{\otimes 2}.$$
In particular, 
    $$G_{\cC,2}:=N^{\otimes 2} \otimes (\det F^{\vee})^{\otimes
      (g+1)}\in \Pic(S)$$
is independent of the choice of $D$. 
If $g$ is odd, then $G_{\cC,2}$ is the square of  
$$G_{\cC,1}:=N\otimes (\det F^{\vee})^{\otimes (g+1)/2}\in \Pic(S)$$
and the latter is also independent on the choice of $D$.
\end{remark}

\begin{definition} \label{w-c} Let $\cC$ be a Weierstrass model. 
We define the \emph{Weierstrass class $[\mathfrak w]\in \Pic(S)$ of $\cC$} as the class of
$G_{\cC,1}$ (resp. $G_{\cC,2}$) when $g$ is odd (resp. even). 
\end{definition}

\begin{proposition} \label{global-L}
Let $f : \cC\to S$ be a Weierstrass  model of $C$. Then $\cC$ can 
be  defined by an integral Weierstrass equation if and only if
$\cZ\simeq \PP^1_S$ and $[\mathfrak w]$ is trivial.
\end{proposition}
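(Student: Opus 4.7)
\emph{Necessity.} Suppose $\cC$ is defined by an integral Weierstrass equation in variables $x,y$. Then Proposition~\ref{obvious}(a) gives $\cZ \simeq \PP^1_S$. Take $D := \mathrm{div}_\infty(x)$, which is elementary with $\det F \simeq \cO_S$. The local computation in the proof of Lemma~\ref{lem:L}, carried out with the \emph{global} coordinate $y$, shows that $\pi_*\cO_{\cC} \simeq \cO_{\cZ} \oplus \cO_{\cZ}(-(g+1)D)\cdot y$ holds globally on $\cZ$, hence $\LL \simeq \cO_{\cZ}(-(g+1)D)$ and therefore $N \simeq \cO_S$ (by uniqueness in Lemma~\ref{lem:L}). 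Plugging this into Remark~\ref{change-D} yields $G_{\cC,1} \simeq \cO_S$ (resp.\ $G_{\cC,2} \simeq \cO_S$), so $[\mathfrak w]$ is trivial.

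\emph{Sufficiency, finding a good section.} Assume $\cZ \simeq \PP^1_S$ and $[\mathfrak w]=0$. The goal is to produce an elementary section $D'$ whose associated line bundle $N'$ from Lemma~\ref{lem:L} is trivial. Start from any elementary section $D$ (which exists by Lemma~\ref{element-s}(2)): then $\det F \simeq \cO_S$. If $g$ is odd, $G_{\cC,1} \simeq N$, so triviality of $[\mathfrak w]$ already gives $N \simeq \cO_S$, and we set $D':=D$. If $g$ is even, we only obtain $N^{\otimes 2} \simeq \cO_S$. Here I would apply Lemma~\ref{element-s}(4) with $M := N^{\vee}$ to get $D'$ with $\cO_{\cZ}(D') \simeq \cO_{\cZ}(D)\otimes \phi^*M$. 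Since $M^{\otimes 2}$ is trivial, $\det F' \simeq \det F \otimes M^{\otimes 2}$ stays trivial, so $D'$ is still elementary. Moreover, $g+1$ is odd, so $M^{\otimes(g+1)} \simeq M \otimes (M^{\otimes 2})^{\otimes g/2} \simeq N^{\vee}$, giving $N' \simeq N \otimes M^{\otimes(g+1)} \simeq \cO_S$.

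\emph{Sufficiency, constructing the equation.} Using Lemma~\ref{element-s}(2)(c), fix $\theta:\cZ \xrightarrow{\sim} \PP^1_S$ with $D' = \theta^{-1}(\mathrm{div}_\infty(x))$, so $\LL \simeq \cO_{\cZ}(-(g+1)D') \simeq \theta^*\cO_{\PP^1_S}(-(g+1))$. The extension \eqref{eq:L} is classified by $\mathrm{Ext}^1(\LL,\cO_{\cZ}) \simeq H^1(\cZ,\LL^{\vee})$. Standard computation on $\PP^1_S$ gives $R^1\phi_*\cO_{\PP^1_S}(g+1)=0$ and $\phi_*\cO_{\PP^1_S}(g+1) \simeq \cO_S^{g+2}$; combined with $H^1(S,\cO_S)=0$ (since $S$ is affine), the Leray spectral sequence yields $H^1(\cZ,\LL^{\vee})=0$. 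Hence \eqref{eq:L} splits and we get an $\cO_{\cZ}$-module decomposition $\pi_*\cO_{\cC} \simeq \cO_{\cZ} \oplus \cO_{\cZ}(-(g+1)D')$. Restricting to $U_0 := \cZ\setminus D' \simeq \Spec R[x]$, where $\cO_{\cZ}(-(g+1)D')$ is trivial, I obtain an $R[x]$-basis $\{1,y\}$ of the integral closure $A$ of $R[x]$ in $K(C)$; writing $y^2$ in this basis produces $y^2+Q(x)y = P(x)$ with $Q,P \in R[x]$. The degree bounds $\deg Q\le g+1$, $\deg P\le 2g+2$ fall out of the same splitting on the complementary affine chart $U_\infty \simeq \Spec R[t]$ with $t=1/x$: there $\cO_{\cZ}(-(g+1)D')$ is generated by $t^{g+1}$, so $z := y/x^{g+1}$ must be integral over $R[t]$, forcing $t^{g+1}Q(1/t),\, t^{2g+2}P(1/t) \in R[t]$. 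The Weierstrass model attached to this equation coincides with $\cC$ because both are the normalization of $\cZ \simeq \PP^1_S$ in $K(C)$.

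The main obstacle is the even genus case in the second paragraph: the Weierstrass class there delivers only that $N^{\otimes 2}$ is trivial, and one must use the parity of $g+1$ together with Lemma~\ref{element-s}(4) to simultaneously kill $N'$ and preserve the elementariness condition $\det F' \simeq \cO_S$. Once the correct section $D'$ is in hand, the remaining work is the standard cohomological splitting on $\PP^1_S$ and the bookkeeping of polynomial degrees on the two affine charts.
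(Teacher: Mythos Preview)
Your proof is correct and follows essentially the same three-step architecture as the paper: (i) from an integral equation, $D=\mathrm{div}_\infty(x)$ is elementary with $N\simeq\cO_S$, forcing $[\mathfrak w]=0$; (ii) conversely, starting from an elementary $D$, you adjust it via Lemma~\ref{element-s}(4) in the even-genus case to kill $N$ while keeping $\det F\simeq\cO_S$; (iii) from an elementary $D'$ with $N'\simeq\cO_S$ you produce the equation. Your twist $M=N^\vee$ is the same as the paper's $M=N$ since $N^{\otimes 2}\simeq\cO_S$.

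The one genuine difference lies in step~(iii). The paper chooses a global generator $y$ of the trivial line bundle $\LL\otimes\cO_\cZ((g+1)D')$ directly and verifies by hand that $\{1,y\}$ is an $R[x]$-basis of $\cO_\cC(\pi^{-1}(W))$, then checks the degree bounds on the other chart. You instead invoke the vanishing $H^1(\cZ,\LL^\vee)\simeq H^1(\PP^1_S,\cO(g+1))=0$ to split the exact sequence~\eqref{eq:L} globally, which immediately hands you $y$ working on both affine charts simultaneously. Your route is cleaner conceptually (it makes explicit why the lift of the generator of $\LL$ can be chosen compatibly on $W$ and $V$, a point the paper leaves implicit), at the cost of importing the Leray spectral sequence; the paper's route is more elementary but requires the reader to see that a consistent lift of $\bar y$ to $\pi_*\cO_\cC$ exists on both charts, which amounts to the same \v{C}ech computation.
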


\begin{proof} {\bf Step (1)} Suppose there exists an elementary
 section $D\in   \cZ(S)$ such that the corresponding $N$ is trivial
  (Lemma~\ref{lem:L}). Let us show the existence of an integral
  Weierstrass equation defining $\cC$.
  
We identify $(\cZ, D)$ with $(\PP^1_S, \mathrm{div}_{\infty}(x))$ for some parameter $x$
  of $\PP^1_S$. Let $y\in K(C)$ be a basis of $\LL\otimes \cO_{\cZ}((g+1)D)$. 
Over $W:=\Spec R[x]$, we have 
$\LL\otimes \cO_{\cZ}((g+1)D)(W)=R[x]y$.
This means that
\begin{equation}
  \label{eq:mim}
  \cO_{\cC}(\pi^{-1}(W))=R[x]\oplus R[x]y. 
\end{equation}
So there exist $Q(x), P(x) \in R[x]$ such that 
$y^2+Q(x)y=P(x)$. 
It remains to bound the degrees of $Q(x)$ and $P(x)$. 
Over $V:=\Spec R[1/x]$, we have 
$$\LL(V)=\cO_{\cZ}(-(g+1)D)(V)y=R[1/x](y/x^{g+1}),$$
which implies that $y/x^{g+1}\in \cO_{\cC}(\pi^{-1}(V))$, hence
integral over $R[1/x]$. Therefore 
$Q(x)/x^{g+1}, P(x)/x^{2g+2}\in R[1/x]$ and $\deg Q(x)\le g+1$,
$\deg P(x)\le 2g+2$. The pair $x, y$
gives an integral Weierstrass equation for $\cC$.
\medskip

{\bf Step (2), ``only if'' part.} Suppose that $\cC$ is defined by an
integral Weierstrass equation as Equation~\eqref{eq:def-Weq}.
We have $\cZ\simeq \PP^1_S$ with parameter $x$. The computations in the proof of Lemma~\ref{lem:L} show that for 
$D=\mathrm{div}_{\infty}(x)$, the corresponding $N$ is free. As $D$ is
elementary, $[\mathfrak w]$ is trivial. 
\medskip

{\bf Step (3), ``if'' part.}  Suppose that $\cZ\simeq \PP^1_S$
and $[\mathfrak w]$ is trivial. Let $x$ be a parameter of $\PP^1_S$ and let
$D=\mathrm{div}_{\infty}(x)$. If $g$ is odd, then $N\simeq [\mathfrak w]$ is
trivial. It follows from Step (1) that $\cC$ is defined by an integral
Weierstrass equation.

Suppose now that $g$ is even. Then $N^{\otimes 2}\simeq [\mathfrak w]$
is trivial. 
By Lemma~\ref{element-s}(4), there exists a
section $D'\in \cZ(S)$ such that
$$\cO_{\cZ}(D')\simeq \cO_{\cZ}(D)\otimes \phi^*N.$$  
This implies that
$\det\phi_*\cO_{\cZ}(D')\simeq N^{\otimes 2}$ and
$N'\simeq N\otimes N^{\otimes (g+1)}$ are free, therefore $\cC$ is
defined by an integral Weierstrass equation, again by applying Step (1) to
$D'$.
\end{proof}

 \begin{remark} In view of this {{proposition}}, 
    our Remarque 6 in \cite{L_Tr}, page 4584, is very likely to be wrong. 
  \end{remark}

 \begin{remark}[Equations defining the same Weierstrass model]
   Suppose that $\cC$ is defined by an integral Weierstrass equation
 $y^2+Q(x)y=P(x)$. 
\begin{enumerate}[{\rm (a)}]
\item  If $g$ is even or if $\Pic(S)[2]=\{ 1\}$, then any other integral Weierstrass equation $z^2+G(u)z=F(u)$  of $\cC$ is related to the previous one by 
 $$ u=\frac{ax+b}{cx+d}, \quad  \text{with} 
\begin{pmatrix}
  a & b \\
  c & d
\end{pmatrix} \in \mathrm{Gl}_2(R)$$
and 
$$z =\frac{ey + H(x)}{(cx+d)^{g+1}}, \quad e\in R^*, H(x)\in R[x], \deg H(x)\le g+1. $$
Indeed, let $D'=\mathrm{div}_{\infty}(u)$ with corresponding $F'=\phi_*\cO_{\cZ}(D')\simeq \cO_S^2$ and $N'$ as in Lemma~\ref{lem:L}. As seen in Step (2) of the proof of Proposition~\ref{global-L}, we have $N'\simeq \cO_S$. Let $M$ be as in Remark~\ref{change-D}, then
$M^{\otimes 2}\simeq \cO_S$ and $M^{\otimes (g+1)}\simeq \cO_S$. So $M\simeq \cO_S$, and
this implies the relation between $x$ and $u$. The condition on $z$ comes from the
fact that $(cx+d)^{g+1}z\in K(C)$ is integral over $R[x]$ and that both equations
give the same discriminant ideal $\Delta_{\cC}$.
\item 
On the opposite, suppose that
$g$ is odd and that there exists $M\in \Pic(S)$ of order exactly $2$. 
We take a section $D'\in \cZ(S)$ as in Lemma~\ref{element-s}(4). Then $D'$ is
elementary as $M^{\otimes 2}\simeq \cO_S$. Moreover the corresponding
$N'\simeq N\otimes M^{\otimes g+1}\simeq \cO_S$. By Step (1) in the proof of
Proposition~\ref{global-L}, if $D'=\mathrm{div}_\infty(u)$, then $\cC$
is defined by an integral Weierstrass equation $z^2+G(u)z=F(u)$. But
$u, x\in K(\cZ)$ are not related by the action of $\mathrm{Gl}_2(R)$. 

Note that this phenomena does not happen for pointed Weierstrass models
(Definition~\ref{def:p-w}) as we then have $\mathrm{div}_\infty(u)=
\mathrm{div}_\infty(x)$, so $u=ax+b$ for some $a\in R^*$ and $b\in R$. 
\end{enumerate}
 \end{remark}
 
{{Now we relate the Weierstrass class to some other classes
attached to $\cC$.}} 

\begin{lemma}[Approximation of local equations] \label{change-eq} Let $\cC$ be a Weierstrass model of $C$
  over $S$. Let $D\in \cZ(S)$. Let us fix
  \begin{enumerate}[\rm (a)]
  \item a Weierstrass equation of $C$ over $K$:  
    \begin{equation}
      \label{eq:w-g}
    y^2+Q(x)y=P(x)  
    \end{equation}
    with $x$ having
    its pole at $D_K$ and
  \item a non-empty open subset $U\subset S$.
  \end{enumerate}
Then there exist 
$r\in \cO_{S}(U)$, $h(x)\in \cO_S(U)[x]$ of degree $\le g+1$ and,
for all $s\in S\setminus U$, an integral Weierstrass equation
\begin{equation}
  \label{eq:w-s}
y_s^2+Q_s(x_s)y_s=P_s(x_s)   
\end{equation}
for $\cC\times_S \Spec\cO_{S,s}$ with
$$x-r=a_sx_s, \quad y-h(x)=b_sy_s, \quad a_s, b_s\in K^*.$$ 
\end{lemma}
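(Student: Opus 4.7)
The plan is to reduce the problem to a finite one at each point of $T:=S\setminus U$ (which is a finite set of closed points since $U$ is a non-empty open of the Dedekind scheme $S$), and then to glue the local data together by weak approximation.

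At each $s\in T$, Lemma~\ref{loc-W} provides an integral Weierstrass equation $\tilde y_s^2+\tilde Q_s(\tilde x_s)\tilde y_s=\tilde P_s(\tilde x_s)$ defining $\cC\times_S\Spec\cO_{S,s}$, and by Lemma~\ref{element-s}(2) applied to the elementary section $D|_s$ of $\cZ\times_S\Spec\cO_{S,s}\simeq\PP^1_{\cO_{S,s}}$, I may arrange for $\tilde x_s$ to have its pole exactly along $D|_s$. Since $x$ and $\tilde x_s$ are both affine parameters of $\PP^1_K$ with simple pole at $D_K$, there exist $\lambda_s\in K^*,\mu_s\in K$ with $x=\lambda_s\tilde x_s+\mu_s$; and since the two Weierstrass equations of $C$ over $K$ given by $(x,y)$ and $(\tilde x_s,\tilde y_s)$ must be related by an admissible transformation, one obtains $y=\beta_s\tilde y_s+\kappa_s(x)$ with $\beta_s\in K^*$ and $\kappa_s\in K[x]$ of degree $\le g+1$.

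Next, I pick integers $N_s\ge 0$ (one for each $s\in T$) sufficiently large, depending on $v_s(\lambda_s), v_s(\beta_s), v_s(\mu_s)$ and the $s$-adic valuations of the coefficients of $\kappa_s$, with an extra margin involving $g$ to absorb binomial expansions. By weak approximation in $K$ at the finite set $T$ — allowing the elements produced to have poles at points of $T$, which is admissible since we only require membership in $\cO_S(U)$ — I find $r\in\cO_S(U)$ and $h(x)\in\cO_S(U)[x]$ of degree $\le g+1$ with $v_s(r-\mu_s)\ge N_s$ and $v_s(h_i-\kappa_{s,i})\ge N_s$ for every coefficient $h_i$ of $h$ and every $s\in T$. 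Setting $a_s:=\lambda_s$ and $b_s:=\beta_s$, one checks that $x_s:=(x-r)/a_s=\tilde x_s+c_s$ with $c_s:=(\mu_s-r)/\lambda_s\in\cO_{S,s}$, so $x_s$ is a local parameter of $\PP^1_{\cO_{S,s}}$ with pole along $D|_s$; and $y_s:=(y-h(x))/b_s=\tilde y_s+k_s(x_s)$, where $k_s\in\cO_{S,s}[x_s]$ has degree $\le g+1$ (obtained by expanding $(\kappa_s(x)-h(x))/\beta_s$ as a polynomial in $x_s$ via $x=a_sx_s+r$ and using the bounds $N_s$).

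Finally, substituting $\tilde x_s=x_s-c_s$ and $\tilde y_s=y_s-k_s(x_s)$ into the local integral equation yields $y_s^2+Q_s(x_s)y_s=P_s(x_s)$ with $Q_s=\tilde Q_s(x_s-c_s)-2k_s(x_s)\in\cO_{S,s}[x_s]$ of degree $\le g+1$ and $P_s=\tilde P_s(x_s-c_s)+\tilde Q_s(x_s-c_s)k_s(x_s)-k_s(x_s)^2\in\cO_{S,s}[x_s]$ of degree $\le 2g+2$, which is the required integral Weierstrass equation at $s$. The main obstacle is the approximation step: because $\mu_s$ and the $\kappa_{s,i}$ may lie in $K\setminus\cO_{S,s}$ (the global $x$ is allowed to be non-integral on the special fibers over $T$), the approximation must be carried out in $K$ rather than in $R$, and one must verify that $r$ and $h$ can be taken in $\cO_S(U)$ — this works because $\cO_S(U)$ contains elements with arbitrary prescribed pole orders at each $s\in T$, those poles then being absorbed harmlessly into the scalars $a_s,b_s\in K^*$.
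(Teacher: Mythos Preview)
Your proof is correct and follows essentially the same approach as the paper: obtain local integral Weierstrass equations at each $s\in S\setminus U$, relate them to the fixed global equation by $x=\lambda_s\tilde x_s+\mu_s$, $y=\beta_s\tilde y_s+\kappa_s(x)$, then use the approximation property of Dedekind domains to find a single $r\in\cO_S(U)$ and $h\in\cO_S(U)[x]$ close to all the $\mu_s$ and $\kappa_s$, and finally translate each $(\tilde x_s,\tilde y_s)$ accordingly. Your version is in fact somewhat more careful than the paper's, since you demand high precision $N_s$ (depending on $v_s(\lambda_s),v_s(\mu_s),v_s(\beta_s)$ and $g$) so that the shift $(\kappa_s(x)-h(x))/\beta_s$, when rewritten as a polynomial in $x_s$, visibly lands in $\cO_{S,s}[x_s]$; the paper only records the condition $h-h_s\in b_s\cO_{S,s}[x]$ and leaves this expansion implicit. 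One small imprecision: your closing remark that the possible poles of $r$ and $h$ at $s$ are ``absorbed harmlessly into the scalars $a_s,b_s$'' is not the actual mechanism---what really happens is that $v_s(r)$ is bounded below by $\min(N_s,v_s(\mu_s))=v_s(\mu_s)$, so its pole order at $s$ is controlled by $\mu_s$, and your choice of $N_s$ already compensates for this.
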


\begin{proof} For each $s\in S\setminus U$, there exists an
integral Weierstrass equation 
$$
y_s^2+Q_s(x_s)y_s=P_s(x_s) 
$$
for   $\cC\times_S \Spec\cO_{S,s}$ with $x_s$ having its pole at 
$D_K$. So there exist $a_s, b_s\in K^*$, $r_s\in K$,
$h_s(x)\in K[x]$ of degree $\le g+1$ such that
$$
x=a_sx_s+r_s, \quad y=b_sy_s+h_s(x). 
$$
By the approximation property in Dedekind domains,
there exist $r\in \cO_{S}(U), h(x)\in \cO_{S}(U)[x]$ of degree $\le g+1$ such that
$$ r-r_s\in a_s\cO_{S,s}, \quad h(x)-h_s(x)\in b_s\cO_{S,s}[x]$$
for all $s\in S\setminus U$. 
We obtain the desired result after replacing
$x_s$ with $x_s-(r-r_s)/a_s$ and $y_s$ with $y_s-(h(x)-h_s(x))/b_s$. 
\end{proof}

\begin{definition} \label{def:ab} Keep the settings of Lemma~\ref{change-eq}. 
Choose an open subset $U$ such that the equation \eqref{eq:w-g} has
good reduction over $U$ and defines $\cC_U$
{{(so $D|_{\cZ_U}$ is the pole divisor of $x|_{\cZ_U}$).}}  
Define $\mathfrak a$ as the fractional ideal of $K$ such that
$\mathfrak a\otimes \cO_{S,s}=a_s\cO_{S,s}$ for all $s\in S\setminus
U$
and $=\cO_{S,s}$ for $s\in U$. Define similarly
$\mathfrak b$. These ideals depend on the choice
of Equation~\eqref{eq:w-g} and on $D$, 
but don't depend on the choice of $U$. Their classes in $\Pic(S)$
don't depend on the choice of Equation~\eqref{eq:w-g} over $K$. 
\end{definition}

\begin{lemma}  \label{a-b-F-N} Let $\cC$ be a Weierstrass model of $C$ over $S$. 
Let $D\in \cZ(S)$. Let $F=\phi_*\cO_{\cZ}(D)$ and let
$N$ be as in Lemma~\ref{lem:L}.
\begin{enumerate}[\rm (1)]
\item  We have 
$$ \det F \simeq \mathfrak a^{-1}, \quad N\simeq \mathfrak b^{-1}$$
and
$$\Delta_{\cC} \simeq (\mathfrak a^{g+1}\mathfrak b^{-2})^{2(2g+1)}, \quad 
\det f_*\omega_{\cC/S} \simeq \mathfrak a ^{-g(g+1)/2} \mathfrak b^g. $$
\item Let $[\mathfrak w]$ be the Weierstrass class of $\cC$
  (Definition~\ref{w-c}). Then 
$$ \Delta_{\cC} \simeq [\mathfrak w]^{\otimes 2(2g+1)},
    \quad \det f_*\omega_{\cC/S} \simeq [\mathfrak w]^{\otimes (-g/2)}$$
if $g$ is even, and
$$ \Delta_{\cC} \simeq   [\mathfrak w]^{\otimes 4(2g+1)}, \quad
\det f_*\omega_{\cC/S} \simeq [\mathfrak w]^{\otimes  (-g)}$$
if $g$ is odd.
\item In $\Pic(S)$, we have the following equalities of subgroups
  $$ \langle [\mathfrak w]^2 \rangle = \langle \Delta_{\cC}, \det f_*\omega_{\cC/S}\rangle$$
  if $4 \mid g$, and
  $$ \langle [\mathfrak w] \rangle = \langle \Delta_{\cC},
  \det f_*\omega_{\cC/S}\rangle$$ 
otherwise. 
\end{enumerate}
\end{lemma}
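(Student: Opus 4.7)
My plan is to prove Part~(1) by using Lemma~\ref{change-eq} to fix a global Weierstrass equation over a dense open $U\subseteq S$ and then comparing it to local equations at each $s\in S\setminus U$ via the substitution $x=a_sx_s+r$, $y=b_sy_s+h(x)$. For $\det F$, I note that $F|_U$ is free on $\{1,x\}$ and $F|_{\cO_{S,s}}$ is free on $\{1,x_s\}$; the transition matrix is upper triangular with determinant $a_s$, which gives $\det F\simeq\mathfrak a^{-1}$. For $N$, I recall from the proof of Lemma~\ref{lem:L} that the local generator of $\phi^*N=\LL\otimes\cO_{\cZ}((g+1)D)$ is represented by the class $\bar y$ of~$y$; since $h(x)\in\cO_{\cZ}$ vanishes in $\LL$, we have $\bar y=b_s\bar y_s$, giving $N\simeq\mathfrak b^{-1}$.

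The remaining two isomorphisms in Part~(1) require explicit transformation rules. For the discriminant, I will work with the polynomial $4P+Q^2$ of degree $n=2g+2$; a direct check shows that under the substitution it becomes $(4P+Q^2)(a_sx_s+r)/b_s^2$, and combining the standard identities $\mathrm{disc}(F(ax+r))=a^{n(n-1)}\mathrm{disc}(F)$ and $\mathrm{disc}(cF)=c^{2n-2}\mathrm{disc}(F)$ then produces $\Delta_s=a_s^{2(g+1)(2g+1)}b_s^{-4(2g+1)}\Delta$, from which the formula for $\Delta_{\cC}$ follows by passing to fractional ideals. For the dualizing sheaf, I plan to use the basis $\omega_i=x^i\,dx/(2y+Q)$ of Proposition~\ref{obvious}: the identities $dx=a_s\,dx_s$ and $2y+Q=b_s(2y_s+Q_s)$ yield an upper triangular change-of-basis matrix with diagonal entries $(a_s^{i+1}/b_s)_{i=0}^{g-1}$, whose determinant $a_s^{g(g+1)/2}/b_s^g$ gives the claimed formula.

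Parts~(2) and~(3) should then follow formally. For~(2), substituting the isomorphisms of~(1) into the definitions of $G_{\cC,1}$ and $G_{\cC,2}$ from Remark~\ref{change-D} presents $[\mathfrak w]$ as the class of $\mathfrak a^{(g+1)/2}\mathfrak b^{-1}$ when $g$ is odd and $\mathfrak a^{g+1}\mathfrak b^{-2}$ when $g$ is even, and the asserted exponents drop out at once. For~(3), I use that $\langle x^m,x^n\rangle=\langle x^{\gcd(m,n)}\rangle$ in the abelian group $\Pic(S)$ and compute the relevant gcds: $\gcd(4(2g+1),g)=\gcd(4,g)=1$ when $g$ is odd, $\gcd(2(2g+1),g/2)=\gcd(2,g/2)=1$ when $g\equiv 2\pmod 4$, and this same gcd equals~$2$ when $4\mid g$, which accounts for the $[\mathfrak w]^2$ factor in that case. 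The principal obstacle I anticipate is the discriminant transformation rule, as it requires careful bookkeeping of how the degree-$(2g+2)$ polynomial $4P+Q^2$ transforms under the combined change of variables in $x$ and $y$; the rest of the argument is essentially linear algebra over the transition data $(a_s,b_s)$.
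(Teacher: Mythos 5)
Your proposal is correct and follows essentially the same route as the paper: compare a global equation over a dense open $U$ with local integral equations at each $s\in S\setminus U$ via Lemma~\ref{change-eq}, read off the transition data $(a_s,b_s)$ for $\det F$, $N$, $\Delta_{\cC}$ and $\det f_*\omega_{\cC/S}$, and deduce (2) and (3) formally (the paper simplifies by translating so that $x=a_sx_s$, $y=b_sy_s$, and quotes the discriminant transformation rule from \cite{L_Tr}, p.~4581, instead of rederiving it). The only caveat is that your discriminant step should invoke the degree-$(2g+2)$ (binary-form) discriminant and the universal definition of $\Delta$ in \cite{L_Tr} rather than literally $\mathrm{disc}(4P+Q^2)$, so that it also covers the cases $\deg(4P+Q^2)<2g+2$ and characteristic $2$; the transformation formula you obtain is exactly the one the paper cites.
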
 

\begin{proof} Part (2) follows immediately from (1) and implies Part (3).
Let us prove Part (1). Keep the notation of Lemma~\ref{change-eq}. We choose the
  equation~\eqref{eq:w-g} with coefficients in $\cO_S(U)$ and 
  having good reduction over $U$ for some dense open subset $U\subseteq S$.
  We also translate $x$ and $y$ so that $x=a_sx_s$ and $y=b_s y_s$ at
  $s\in S\setminus U$. 
\medskip

  We have
  $$ \phi_*\cO_{\cZ}(D)(U)=\cO_S(U)\oplus \cO_S(U)x, \quad
  \phi_*\cO_{\cZ}(D)_s=\cO_{S,s}\oplus \cO_{S,s}x_s,$$
  so we have canonically 
  $$ (\det F)(U)=\cO_{S}(U)(1\wedge x), \quad (\det F)_s=\cO_{S,s}(1\wedge x_s).$$
  Then the relations $x=a_sx_s$ imply that $\det F\simeq \mathfrak
  a^{-1}$.
  By the computations in the proof of Lemma~\ref{lem:L}, we see that 
  $N$ has bases $y$ over $\cO_S(U)$ and $y_s$ over $\cO_{S,s}$.
  The relations $y=b_sy_s$ then imply that $N\simeq \mathfrak
  b^{-1}$. 
\medskip 
  
Denote by $\Delta$ the discriminant of Equation~\eqref{eq:w-g}
and by $\Delta_s$ that of Equation~\eqref{eq:w-s}.  
Then
$$ \Delta=b_s^{4(2g+1)} a_s^{-2(g+1)(2g+1)}\Delta_s  $$
(see \cite{L_Tr}, bottom of page 4581). This proves that 
$$\Delta R= \mathfrak b^{4(2g+1)} \mathfrak a^{-2(g+1)(2g+1)}\Delta_{\cC}$$
as fractional ideals of $K$, hence
$$\Delta_{\cC} \simeq (\mathfrak a^{g+1}\mathfrak b^{-2})^{2(2g+1)}. $$
Now we compute $f_*\omega_{\cC/S}$. Denote by
$$\omega_U= 
\frac{dx}{(2y+Q(x))} \wedge ... \wedge 
\frac{x^{g-1}dx}{(2y+Q(x))}$$
and 
$$\omega_s= 
\frac{dx_s}{(2y_s+Q_s(x_s))} \wedge ... \wedge 
\frac{x_s^{g-1}dx_s}{(2y_s+Q(x_s))}.$$
They are respectively bases of $(\det f_*\omega_{\cC/S})(U)$ and
of $(\det f_*\omega_{\cC/S})_s$.
As $x=a_sx_s$ and $y=b_sy_s$, we have $Q_s(x_s)=Q(x)/b_s$. Thus
$$\frac{x^idx}{2y+Q(x)}=(a_s^{i+1}b_s^{-1}) \frac{x_s^idx_s}{2y_s+Q(x_s)}$$ 
and $\omega_U=a_s^{g(g+1)/2}b_s^{-g} \omega_s$, 
so
$ \det f_*\omega_{\cC/S} \simeq
\mathfrak a ^{-g(g+1)/2} \mathfrak b^g$.
\end{proof}

\begin{corollary} \label{D-O}Let $\cC$ be a Weierstrass model of $C$
  over $S$. Then the following properties are true. 
  \begin{enumerate}[{\rm (1)}] 
  \item If $g=1$, then $\Delta_{\cC} \simeq (\det
    f_*\omega_{\cC/S})^{\otimes (-12)}$ (well-known);
\item If $g=2$, then $\Delta_{\cC/S}  \simeq (\det f_*\omega_{\cC/S})^{\otimes (-10)}$.  
 \item If $g=4$, then $\Delta_{\cC/S}  \simeq (\det
   f_*\omega_{\cC/S})^{\otimes (-9)}$. 
\item In $\Pic(S)$, $\Delta_{\cC}$ is a $4(2g+1)$-th power (resp. $2(2g+1)$-th
  power) if $g$ is odd (resp. even).  
  \end{enumerate}
  \end{corollary}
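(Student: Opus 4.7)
The plan is to reduce the entire statement to Lemma~\ref{a-b-F-N}(2), which already expresses both $\Delta_{\cC}$ and $\det f_*\omega_{\cC/S}$ as explicit powers of the Weierstrass class $[\mathfrak w]\in \Pic(S)$. Once those two identities are in hand, parts (1)--(4) amount to bookkeeping with the exponents, with no further geometric input required.

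Concretely, I would plug $g=1,2,4$ into the formulas of Lemma~\ref{a-b-F-N}(2). For $g=1$ (odd), that lemma gives $\det f_*\omega_{\cC/S}\simeq [\mathfrak w]^{\otimes(-1)}$ and $\Delta_{\cC}\simeq [\mathfrak w]^{\otimes 12}$, so $\Delta_{\cC}\simeq (\det f_*\omega_{\cC/S})^{\otimes(-12)}$, proving (1). For $g=2$ (even), $\det f_*\omega_{\cC/S}\simeq [\mathfrak w]^{\otimes(-1)}$ and $\Delta_{\cC}\simeq [\mathfrak w]^{\otimes 10}$, yielding $\Delta_{\cC}\simeq (\det f_*\omega_{\cC/S})^{\otimes(-10)}$, proving (2). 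For $g=4$ (even), $\det f_*\omega_{\cC/S}\simeq [\mathfrak w]^{\otimes(-2)}$ and $\Delta_{\cC}\simeq [\mathfrak w]^{\otimes 18}$, and since $18=(-9)\cdot(-2)$ we obtain $\Delta_{\cC}\simeq (\det f_*\omega_{\cC/S})^{\otimes(-9)}$, proving (3).

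The numerical point governing why the list of good genera is exactly $\{1,2,4\}$ is that in these three cases the exponent of $[\mathfrak w]$ in $\det f_*\omega_{\cC/S}$ divides the exponent in $\Delta_{\cC}$ as integers; equivalently, $\det f_*\omega_{\cC/S}$ generates the same cyclic subgroup as $[\mathfrak w]$ (respectively $[\mathfrak w]^2$ when $4\mid g$) in $\Pic(S)$, exactly as recorded in Lemma~\ref{a-b-F-N}(3).

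Part (4) is read off directly from Lemma~\ref{a-b-F-N}(2): the exponents $2(2g+1)$ (if $g$ is even) and $4(2g+1)$ (if $g$ is odd) of $[\mathfrak w]$ in $\Delta_{\cC}$ exhibit $\Delta_{\cC}$ as the indicated power in $\Pic(S)$. The main obstacle is therefore not in this corollary itself but in the prior establishment of Lemma~\ref{a-b-F-N}, where the local-to-global comparison via the approximation lemma and the explicit transformation formulas for the discriminant and the differential basis do the real work.
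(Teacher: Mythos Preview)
Your proposal is correct and is exactly the intended approach: the paper states this as a corollary with no proof, treating it as an immediate consequence of Lemma~\ref{a-b-F-N}(2) by plugging in the specific values of $g$ and reading off the exponents. Your verification of the arithmetic for $g=1,2,4$ and your observation about part~(4) match this perfectly.
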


\begin{thm} \label{main} Let $\cC$ be a Weierstrass model of $C$ over
  $S$. Then $\cC$ is defined by an integral Weierstrass equation 
  (Definition~\ref{integral-eq}) under any of the following conditions: 
  \begin{enumerate}[{\rm (1)}]
  \item $\Delta_{\cC}$ is principal, 
    $\det f_*\omega_{\cC/S}$ is free and either
    \begin{enumerate}
    \item $g$ is odd and $\cZ\simeq \PP^1_S$ or, 
    \item $g\equiv 2 \mod 4$  
  \end{enumerate}
    (so the converse of
    Proposition~\ref{obvious} is true if $4 \nmid g$);
  \item $\Pic(S)$ is finite of odd order, 
    $\Delta_{\cC}$ is principal and
    $\det f_*\omega_{\cC/S}$ is free;
 \item $\Pic(S)$ is finite of order prime to $2(2g+1)$,
or $g$ is even and the torsion elements of $\Pic(S)$ are of order prime to
    $2(2g+1)$, and $\Delta_{\cC}$ is principal. 
  \item $\Pic(S)$ is finite of order prime to $2g$ and
    $\det f_*\omega_{\cC/S}$ is free. 
  \end{enumerate}
\end{thm}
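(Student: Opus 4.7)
The plan is to apply Proposition~\ref{global-L}, which reduces the problem to showing under each hypothesis that both $\cZ\simeq\PP^1_S$ and the Weierstrass class $[\mathfrak w]\in\Pic(S)$ is trivial. The two main tools will be Lemma~\ref{a-b-F-N}, which writes $\Delta_{\cC}$ and $\det f_*\omega_{\cC/S}$ as explicit powers of $[\mathfrak w]$, and Proposition~\ref{prop:P1}, which characterizes $\cZ\simeq\PP^1_S$ by the condition that $\det\phi_*\cO_{\cZ}(D)$ be a square in $\Pic(S)$.

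For the triviality of $[\mathfrak w]$, the mechanism is uniform: by Lemma~\ref{a-b-F-N}(2), $\Delta_{\cC}\simeq\cO_S$ gives $[\mathfrak w]^{2(2g+1)}=1$ (or $[\mathfrak w]^{4(2g+1)}=1$ when $g$ is odd), and $\det f_*\omega_{\cC/S}\simeq\cO_S$ gives $[\mathfrak w]^{g/2}=1$ (or $[\mathfrak w]^g=1$). In case (1)(a), $\gcd(g,4(2g+1))=1$ yields $[\mathfrak w]=1$; in case (1)(b) with $g\equiv 2\pmod 4$, $\gcd(g/2,2(2g+1))=1$ yields the same. Case (2) combines Lemma~\ref{a-b-F-N}(3) (which only gives $[\mathfrak w]^2=1$ when $4\mid g$) with the odd order of $\Pic(S)$ to force $[\mathfrak w]=1$. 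Cases (3) and (4) each use a single one of the two relations, together with the coprimality of $|\Pic(S)|$ with $2(2g+1)$ or $2g$ respectively.

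For $\cZ\simeq\PP^1_S$, cases (2)--(4) come for free: $|\Pic(S)|$ is odd, so squaring is surjective on $\Pic(S)$ and Proposition~\ref{prop:P1} applies. Case (1)(a) assumes $\cZ\simeq\PP^1_S$ outright. The delicate case is (1)(b), where no hypothesis is placed on $\Pic(S)$: once $[\mathfrak w]=1$ is established, Lemma~\ref{a-b-F-N}(1) rewrites this triviality as $\mathfrak a^{g+1}\simeq\mathfrak b^2$. Since $g+1$ is odd while $\mathfrak b^2$ is a square, $\mathfrak a$ itself must be a square (write $g+1=2k+1$ and read $\mathfrak a\simeq \mathfrak a^{g+1}\otimes\mathfrak a^{-2k}$), so $\det F\simeq\mathfrak a^{-1}$ is a square and Proposition~\ref{prop:P1} concludes.

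The main obstacle is case (1)(b), since it amounts to a converse to Proposition~\ref{obvious} when $4\nmid g$ with no assumption on $\Pic(S)$. The key parity observation is that, for $g$ even, the triviality of $[\mathfrak w]=\mathfrak a^{g+1}\mathfrak b^{-2}$ automatically forces $\mathfrak a$ to be a square precisely because $g+1$ is odd; the other cases then reduce to standard divisibility bookkeeping in the abelian group $\Pic(S)$.
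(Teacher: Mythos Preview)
Your argument is correct and follows the paper's strategy for cases (1), (2), (4), and the first alternative of (3), but you have overlooked the second alternative of case (3): \emph{$g$ is even and the torsion elements of $\Pic(S)$ are of order prime to $2(2g+1)$}. Here $\Pic(S)$ is not assumed finite, so your blanket claim that ``$|\Pic(S)|$ is odd, so squaring is surjective on $\Pic(S)$'' does not apply, and your argument for $[\mathfrak w]=1$ via coprimality with $|\Pic(S)|$ does not make sense either.

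The fix is immediate, and in fact you already wrote down the needed idea in your treatment of (1)(b). Since $\Delta_{\cC}$ is principal, Lemma~\ref{a-b-F-N}(1) gives $(\mathfrak a^{g+1}\mathfrak b^{-2})^{2(2g+1)}\simeq\cO_S$, so $\mathfrak a^{g+1}\mathfrak b^{-2}$ is a torsion element of order dividing $2(2g+1)$ and is therefore trivial by hypothesis. For $g$ even this is exactly $[\mathfrak w]$ (Definition~\ref{w-c} combined with Lemma~\ref{a-b-F-N}(1)), so $[\mathfrak w]=1$; and since $g+1$ is odd, $\mathfrak a^{g+1}\simeq\mathfrak b^2$ forces $\mathfrak a$ to be a square, whence $\det F\simeq\mathfrak a^{-1}$ is a square and $\cZ\simeq\PP^1_S$ by Proposition~\ref{prop:P1}. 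This is precisely the paper's argument for that subcase.
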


\begin{proof} We have to show that $\cZ\simeq \PP^1_S$ and $[\mathfrak w]$ is trivial under
  any of the conditions in the theorem.
  \medskip
  
  (1.a) This follows from Lemma~\ref{a-b-F-N}(3). 
  \medskip

  (1.b) Take any section $D\in \cZ(S)$ and consider the fractional
  ideals
  $\mathfrak a, \mathfrak b$ as in Definition~\ref{def:ab}. Then
  Lemma~\ref{a-b-F-N}(1) implies that $\mathfrak a^{-g(g+1)/2}\mathfrak b^g\simeq \cO_S$, 
so $\mathfrak a$ is a square, 
and $\cZ\simeq \PP^1_S$. Lemma~\ref{a-b-F-N}(3) implies that $[\mathfrak w]=1$. 
  \medskip 
 
  (2) First we have $\cZ\simeq \PP^1_S$ by
  Proposition~\ref{prop:P1}. Lemma~\ref{a-b-F-N}(3) implies that
  $[\mathfrak w]^2=1$, hence $[\mathfrak w]=1$. 
  \medskip

  (3) If $\Pic(S)$ is finite of order prime to $2(2g+1)$, then 
  $\cZ\simeq \PP^1_S$. Therefore 
  $[\mathfrak w]^{4(2g+1)}=1$ by Lemma~\ref{a-b-F-N}(2). Thus $[\mathfrak w]=1$.

  Now suppose $g$ is even and the torsion elements of $\Pic(S)$ are of
  order prime to $2(2g+1)$. With the notation of Lemma~\ref{a-b-F-N}, 
  as $\Delta_{\cC}$ is principal, $\mathfrak a^{g+1}\mathfrak b^{-2}$ is a torsion element of order dividing $2(2g+1)$, therefore
  trivial. So $\mathfrak a$ is a square in $\Pic(S)$ as $g$ is even,
  and $\cZ\simeq \PP^1_S$. We can now argue as in the first part.   
  \medskip

  (4) Similar to (3). 
\end{proof}

As an immediate consequence, we proved a conjecture of M. Sadek \cite{Sad}. 

\begin{corollary} \label{conj-Sadek} 
  Let $K$ be a number field of class number prime to $2(2g+1)$. Let
  $C$ be a hyperelliptic curve of genus $g$ 
having everywhere   good reduction over $K$. Then the smooth model
$\cC$ of $C$ is defined by an integral Weierstrass equation. 
\end{corollary}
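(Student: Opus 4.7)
The plan is to read this off directly from Theorem~\ref{main}(3), with the observation that the good reduction hypothesis gives for free the principality of the discriminant ideal.

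First, I would observe that ``everywhere good reduction'' means that $C$ admits a smooth projective model $\cC \to S = \Spec \cO_K$. Since the hyperelliptic involution is canonical, it extends to $\cC$ and the quotient is smooth over $S$, so $\cC$ is a Weierstrass model in the sense fixed in \S 2. Because $\cC$ is smooth over $S$, Definition~\ref{minimal-df} tells us that $\Delta_{\cC} = \cO_S$; in particular $\Delta_{\cC}$ is a principal ideal of $R = \cO_K$.

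Second, by hypothesis $\Pic(S)$ is the ideal class group of $K$, which is finite of order prime to $2(2g+1)$. So the two hypotheses of Theorem~\ref{main}(3) are satisfied, and that theorem produces an integral Weierstrass equation defining $\cC$.

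Finally, to recover the stronger form stated in the abstract (invertible discriminant), I would note that an integral Weierstrass equation defining $\cC$ has discriminant generating $\Delta_{\cC}$, which equals $\cO_S$ by smoothness; hence the discriminant is a unit of $\cO_K$. There is no real obstacle here: the content is entirely in Theorem~\ref{main}(3), and the corollary is just the specialization to the good-reduction case where the principality of $\Delta_{\cC}$ is automatic.
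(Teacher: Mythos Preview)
Your proposal is correct and follows essentially the same route as the paper: identify $\cC$ as a Weierstrass model (extension of the involution plus smoothness of the quotient), observe $\Delta_{\cC}=\cO_S$ from smoothness, and invoke Theorem~\ref{main}(3). The paper's only difference is cosmetic: it justifies the extension of the involution by the uniqueness of the smooth projective model rather than by saying the involution is ``canonical,'' and it phrases the smoothness of the quotient as following from $\cC$ being a smooth curve.
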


\begin{proof} By the uniqueness of the smooth projective model, the
  hyperelliptic involution of $C$ extends to an involution on $\cC$
  and the quotient is smooth because $\cC$ is a smooth curve. So
  $\cC$ is a Weierstrass model of $C$ with $\Delta_{\cC}=\cO_K$.
  By Theorem~\ref{main}(3), $\cC$ is defined by an integral Weierstrass
  equation. 
\end{proof}
\end{section}

\begin{section}{Pointed integral Weierstrass equations} \label{p-h}
  
The usual Weierstrass models of elliptic curves $E$ are {\em pointed} Weierstrass
models with respect to the origin $o$ of $E$, in the sense that $o$
extends to a section contained in the smooth locus of the Weierstrass
model. For hyperelliptic curves $C$ having a rational \emph{Weierstrass point} $P\in C(K)$ 
(that is $\dim_K H^0(C, \cO_C(2P))=2$), P. Lockhart (\cite{Lock}) defined and studied 
pointed integral Weierstrass equations for $(C, P)$.

\begin{definition} \label{def:p-w} A \emph{pointed Weierstrass model} of $(C, P)$ over $S$ 
is a Weierstrass model $\cC$ such that the Zariski closure
$\overline{\{ P\}}$ is contained in the smooth locus 
of $\cC \to S$. Such a model is said to be \emph{defined by a pointed
integral Weierstrass equation} of
$(C, P)$ if the affine scheme $\cC \setminus \overline{\{ P \}}$ is defined by
$$
y^2+Q(x)y=P(x)
$$
with $P(x), Q(x)\in \cO_S[x]$, $P(x)$ is monic of degree $2g+1$ and
$\deg Q(x)\le g$.
\smallskip

Note that it may happen that a pointed Weierstrass model $\cC$ is
defined by an integral Weierstrass equation, but not by a pointed
integral Weierstrass equation (see Proposition~\ref{examples-s}(e)). 
\smallskip

We also have an obvious notion of \emph{minimal pointed Weierstrass model}.
Such a model always exists and is unique up to isomorphism 
(Corollary~\ref{pmw-unique}). 
When $\cO_S$ is principal (e.g. a discrete
valuation ring), the minimal pointed Weierstrass model is defined by 
a pointed integral Weierstrass equation (see e.g., \cite{Lock}, Proposition 2.8). 
\end{definition}

Let $\cC$ be a pointed Weierstrass model of $(C, P)$. Let us recall
the definition of the \emph{pointed Weierstrass class} $[\mathfrak u] \in
\Pic(S)$ of $\cC$ similarly to \cite{Lock}, Definition 2.7.
Let 
\begin{equation}
  \label{eq:p-w-e}
   y^2+Q(x)y=P(x) 
\end{equation} 
be a pointed Weierstrass equation of $(C, P)$ over $K$,  
of discriminant $\Delta\in K^*$. For any closed point $s\in S$,
we have a pointed integral Weierstrass equation
$$ y_s^2+Q_s(x_s)y_s=P_s(x_s) $$
for $\cC\times_{S} \Spec \cO_{S,s}$, 
with discriminant 
$\Delta_s\in \cO_{S,s}\setminus \{ 0\}$. Using Lemma~\ref{change-eq},
we see that translating, if necessary,
$x$ and $x_s$ by elements of $K$ and $\cO_{S,s}$ respectively
and $y, y_s$ by polynomials of degree $\le g$  in $K[x]$ and in $\cO_{S,s}[x]$
respectively (which keeps the equations as pointed Weierstrass
equations), we can suppose that
$$x=a_sx_s, \quad y=b_s y_s, \quad a_s, b_s\in K^*.$$ 
 As $P(x)$ and $P_s(x_s)$ are monic of
degree $2g+1$ we find $a_s^{2g+1}=b_s^{2}$. Denote by
$u_s=a_s^{g}/b_s$, then $a_s=u_s^{-2}$ and $b_s=u_s^{-(2g+1)}$. 

\begin{definition}[\cite{Lock}, Definition  2.7] \label{p-w-c} 
  The class in $\Pic(S)$ of the fractional ideal $\mathfrak u$ of $K$ such that
  $\mathfrak u\otimes \cO_{S,s}=u_s\cO_{S,s}$ for all $s\in S$ is called
  the \emph{pointed Weierstrass class\footnote{In fact our definition is the
 inverse of that of \cite{Lock}.} of $\cC$}. It does not
  depend on the choice of the equation~\eqref{eq:p-w-e}. With the notation of
  Definition~\ref{def:ab}, if we take for $D$ the image of
  $\overline{\{ P\}}$ in $\cZ(S)$, 
  we have $\mathfrak a=\mathfrak u^{-2}$ and
  $\mathfrak b=\mathfrak u^{-(2g+1)}$. 

Note that $[\mathfrak u]$ differs from the non-pointed Weierstrass class
(Definition~\ref{w-c}). Using Lemma~\ref{a-b-F-N}(1), we see that 
$[\mathfrak w]=[\mathfrak u]^{g}$ if $g$ is odd, and 
$[\mathfrak w]=[\mathfrak u]^{2g}$ if $g$ is even.
\end{definition}

\begin{proposition} \label{pointed-main}
   Let $\cC$ be a pointed Weierstrass model of $(C, P)$.
Let $D\in \cZ(S)$ be the image of $\overline{\{ P\}}\in \cC(S)$.
   \begin{enumerate}[\rm (1)] 
   \item We have
     $$ \Delta_{\cC} \simeq  \mathfrak u^{4g(2g+1)},
     \quad \det f_*\omega_{\cC/S} \simeq \mathfrak u^{-g^2}.$$
   \item The model $\cC$ is defined by a 
     pointed integral Weierstrass equation if and only if
     $\mathfrak u$ is principal. 
   \item The quotient $\cZ$ is isomorphic to $\PP^1_S$.
   \end{enumerate}
\end{proposition}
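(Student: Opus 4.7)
The strategy is to exploit the identifications $\mathfrak a \simeq \mathfrak u^{-2}$ and $\mathfrak b \simeq \mathfrak u^{-(2g+1)}$ from Definition~\ref{p-w-c}, valid for the section $D \in \cZ(S)$ arising as the image of $\overline{\{P\}}$, and to plug them into Lemma~\ref{a-b-F-N}(1). For (1), direct computation yields $\mathfrak a^{g+1}\mathfrak b^{-2} \simeq \mathfrak u^{-2(g+1)+2(2g+1)} = \mathfrak u^{2g}$, so $\Delta_{\cC} \simeq \mathfrak u^{4g(2g+1)}$, and $\mathfrak a^{-g(g+1)/2}\mathfrak b^{g} \simeq \mathfrak u^{g(g+1)-g(2g+1)} = \mathfrak u^{-g^2}$. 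For (3), the same lemma gives $\det F \simeq \mathfrak a^{-1} \simeq \mathfrak u^{2}$, manifestly a square in $\Pic(S)$, so Proposition~\ref{prop:P1} forces $\cZ \simeq \PP^1_S$.

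For (2), the ``only if'' direction is tautological: a global pointed integral equation exhibits a single pair $(x, y)$ working at every place, so one may take $a_s = b_s = 1$ and hence $u_s = 1$. For the converse, assume $\mathfrak u$ is principal. Then $\det F \simeq \mathfrak u^{2}$ and $N \simeq \mathfrak u^{2g+1}$ are both free, so $D$ is an elementary section with trivial associated line bundle $N$, and Step~(1) in the proof of Proposition~\ref{global-L} produces an integral Weierstrass equation $y^2 + Q(x)y = P(x)$ defining $\cC$, with $x$ a parameter of $\cZ \simeq \PP^1_S$ whose pole divisor equals $D$.

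The hard part is upgrading this integral equation to a \emph{pointed} one, i.e., showing that $P$ is monic of degree exactly $2g+1$ and $\deg Q \le g$. The plan is to compare the global $(x, y)$ to a local pointed equation $y_s^2 + Q_s(x_s)y_s = P_s(x_s)$ over each $\cO_{S,s}$. After absorbing a translation of $x_s$ by an element of $\cO_{S,s}$ and a polynomial correction of $y_s$ of degree $\le g$ in $x_s$ (both of which preserve the pointed form of the local equation, as in Definition~\ref{p-w-c}), one may write cleanly $x = a_s x_s$ and $y = b_s y_s$ with $a_s, b_s \in K^*$. Substitution then yields $P(a_s x_s) = b_s^2 P_s(x_s)$ and $Q(a_s x_s) = b_s Q_s(x_s)$; since $P_s$ is monic of degree $2g+1$, the leading coefficient of $P$ equals $b_s^2/a_s^{2g+1} = 1$ by the defining identity $a_s^{2g+1} = b_s^2$ of Definition~\ref{p-w-c}, and $\deg Q \le g$ follows from $\deg Q_s \le g$ in the same way.
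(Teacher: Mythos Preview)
Your treatment of (1), (3), and the ``only if'' half of (2) is correct and coincides with the paper's proof. The production of an integral Weierstrass equation via Step~(1) of Proposition~\ref{global-L} is also exactly what the paper does; in fact the paper's proof stops there and leaves the passage from ``integral'' to \emph{pointed} integral equation to the reader.

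Your attempt to make that passage explicit, however, contains a circularity. You assert that after a polynomial correction of $y_s$ of degree $\le g$ one may write $y = b_s y_s$; but from the construction one only knows $y = b_s y_s + h_s(x_s)$ with $\deg h_s \le g+1$, and a degree-$(g+1)$ term cannot be absorbed into $y_s$ without destroying the pointed form of the local equation. More seriously, the identity $a_s^{2g+1} = b_s^2$ of Definition~\ref{p-w-c} was derived by comparing a \emph{pointed} equation over $K$ to the local pointed equations; your $(a_s,b_s)$, which relate the not-yet-pointed global $(x,y)$ to $(x_s,y_s)$, are different elements of $K^*$ and need not satisfy it. Indeed, with your substitution $b_s^2/a_s^{2g+1}$ would equal the leading coefficient of $P$, a fixed element of $R$, and there is no reason for it to be $1$ rather than merely a unit.

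A direct repair avoids the local comparison entirely. In the chart $t=1/x$, $z=y/x^{g+1}$, the section $\overline{\{P\}}$ lies over $t=0$, so $c:=z|_{\overline{\{P\}}}\in R$; since $P$ is a Weierstrass point, the top coefficients $q_0,p_0$ of $Q,P$ satisfy $(z-c)^2=z^2+q_0z-p_0$, i.e.\ $q_0=-2c$, $p_0=-c^2$. Replacing $y$ by $y-cx^{g+1}$ (an operation over $R$) then forces $\deg Q\le g$ and $\deg P\le 2g+1$. Smoothness of $\cC$ along $\overline{\{P\}}$ makes the new leading coefficient $\lambda$ of $P$ a unit of $R$, and the rescaling $(x,y)\mapsto(\lambda^{-1}x,\,\lambda^{-(g+1)}y)$ renders $P$ monic.
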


\begin{proof} (1) Follows from Lemma~\ref{a-b-F-N}. 
  See also \cite{Lock}, Equality (2.3) and Proposition 1.12. 

  (2) The only if part is clear. Suppose conversely that
  $\mathfrak u$ is principal. Then 
  $\mathfrak a$ is trivial and $D$ is 
  an elementary section. As $\mathfrak b$  is principal, $\cC$ is
  defined by an integral Weierstrass equation by 
  Step (1) in the proof of Proposition~\ref{global-L}.  

  (3) We use Lemma~\ref{a-b-F-N}. 
  As $\mathfrak a=\mathfrak u^{-2}$ is a square in $\Pic(S)$ and
  $\det \phi_*\cO_{\cZ}(D) \simeq \mathfrak a^{-1}$ by
  Lemma~\ref{a-b-F-N} (1), we find
  $\cZ\simeq \PP^1_S$ (Proposition~\ref{prop:P1}).
\end{proof}

\begin{corollary}
  If $\Pic(S)$ has finite order prime to $2g(2g+1)$ and if
  $\Delta_{\cC}$ is principal (e.g. if $\cC$ is smooth over $S$), then
  $\cC$ is defined by a pointed integral Weierstrass equation. 
\end{corollary}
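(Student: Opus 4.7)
The plan is to reduce the claim, via Proposition~\ref{pointed-main}(2), to showing that the pointed Weierstrass class $[\mathfrak u] \in \Pic(S)$ is trivial under the stated hypotheses. Once that is done, Proposition~\ref{pointed-main}(2) immediately gives that $\cC$ is defined by a pointed integral Weierstrass equation, and the parenthetical case follows from Definition~\ref{minimal-df} (smoothness of $\cC/S$ is equivalent to $\Delta_\cC = \cO_S$, which is certainly principal).

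So the task is to show $[\mathfrak u] = 1$. First, by Proposition~\ref{pointed-main}(1), we have $\Delta_\cC \simeq \mathfrak u^{4g(2g+1)}$ as ideals of $R$. The hypothesis that $\Delta_\cC$ is principal then yields the relation
\[
[\mathfrak u]^{4g(2g+1)} = 1 \quad \text{in } \Pic(S).
\]
Consequently, the order of $[\mathfrak u]$ in $\Pic(S)$ divides $4g(2g+1)$, and of course it also divides $|\Pic(S)|$.

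The remaining step is purely arithmetic: I must argue that $\gcd(|\Pic(S)|, 4g(2g+1)) = 1$. The hypothesis gives $\gcd(|\Pic(S)|, 2g(2g+1)) = 1$. Since $2g$ is even, $2$ already divides $2g(2g+1)$, so $|\Pic(S)|$ is in particular odd, and the set of prime divisors of $4g(2g+1) = 2 \cdot 2g(2g+1)$ coincides with that of $2g(2g+1)$. Hence $|\Pic(S)|$ is coprime to $4g(2g+1)$ as well. It follows that the order of $[\mathfrak u]$ equals $1$, i.e. $\mathfrak u$ is principal, and Proposition~\ref{pointed-main}(2) finishes the proof.

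There is no real obstacle here since Proposition~\ref{pointed-main} has already done all the geometric work; the entire argument is a short elementary number-theoretic deduction from the divisibility $[\mathfrak u]^{4g(2g+1)} = 1$ combined with the coprimality of $|\Pic(S)|$ to $4g(2g+1)$.
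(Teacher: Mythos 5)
Your proof is correct and is exactly the intended argument: the paper states this corollary without proof as an immediate consequence of Proposition~\ref{pointed-main}, and your deduction (principality of $\Delta_\cC$ gives $[\mathfrak u]^{4g(2g+1)}=1$, coprimality of $|\Pic(S)|$ with $2g(2g+1)$, hence with $4g(2g+1)$, forces $[\mathfrak u]=1$, then apply part (2)) is precisely that route.
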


The next proposition is a straightforward generalization of \cite{Sil-84},
Theorem 5.

\begin{proposition} \label{twist} Let $(C, P)$ be a pointed hyperelliptic curve over
  a number field $K$. Let $\mathfrak u$ be any fractional ideal of $K$. 
  Then $[\mathfrak u]$ is the pointed Weierstrass class of
  the minimal pointed Weierstrass model of some quadratic twist of
  $(C, P)$.
\end{proposition}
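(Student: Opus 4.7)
The proof follows Silverman's strategy (\cite{Sil-84}, Theorem~5). Fix a pointed Weierstrass equation $y^2+Q(x)y=P(x)$ of $(C,P)$ over $K$ with $P$ monic of degree $2g+1$ and $\deg Q\le g$, and let $[\mathfrak u_0]\in\Pic(\cO_K)$ denote the pointed Weierstrass class of this equation (Definition~\ref{p-w-c}). For $d\in \cO_K\setminus\{0\}$, the quadratic twist $(C^d,P^d)$ is defined over $K$ by $d\tilde y^2=P(x)$ (taking $Q=0$ for simplicity; the case $Q\ne 0$ is handled similarly away from primes above~$2$), and substituting $x=X/d$ and $\tilde y=Y/d^{g+1}$ yields the pointed integral Weierstrass equation
\[
Y^2=d^{2g+1}P(X/d)=X^{2g+1}+a_{2g}\,d\,X^{2g}+a_{2g-1}\,d^{2}\,X^{2g-1}+\cdots+a_{0}\,d^{2g+1},
\]
where $P(x)=\sum a_i x^i$ with $a_{2g+1}=1$.

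The plan is to compute the pointed Weierstrass class $[\mathfrak u_d]\in\Pic(\cO_K)$ of this twisted equation, which by Definition~\ref{p-w-c} agrees with that of the minimal pointed Weierstrass model of $(C^d,P^d)$. At each finite place $v$, a transformation $X=u_v^{-2}X_v$, $Y=u_v^{-(2g+1)}Y_v$ applied to the above equation replaces the twist parameter~$d$ by $du_v^2$. Therefore, when $y^2+Qy=P$ is locally minimal at~$v$, the local minimum of the twist is attained by choosing $u_v\in K_v^*$ with $v(u_v)=-\lfloor v(d)/2\rfloor$, so that $v(du_v^2)\in\{0,1\}$; for general $(C,P)$ an additional place-dependent shift appears, coming from the original equation. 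Collecting the valuations $v(u_v)$ yields $[\mathfrak u_d]=[\mathfrak u_0]\cdot[\mathfrak a]^{-1}$, where $\mathfrak a=\prod_v v^{\lfloor v(d)/2\rfloor}$.

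To realize a prescribed class $[\mathfrak u]\in\Pic(\cO_K)$, set $[\mathfrak a]=[\mathfrak u_0]\cdot[\mathfrak u]^{-1}$ and pick an integral ideal representative $\mathfrak a$ of this class. Since every ideal class of $K$ contains a squarefree representative (for instance a prime ideal, by the density theorem), choose a squarefree integral ideal $\mathfrak s$ with $[\mathfrak s]=[\mathfrak a]^{-2}$; then $\mathfrak a^{2}\mathfrak s$ is principal, and we take $d\in\cO_K$ to be a generator. At every prime $\mathfrak p$ we obtain $v(d)=2v(\mathfrak a)+\epsilon_{\mathfrak p}$ with $\epsilon_{\mathfrak p}\in\{0,1\}$, so $\lfloor v(d)/2\rfloor=v(\mathfrak a)$, and the pointed Weierstrass class of the twist is $[\mathfrak u_0]\cdot[\mathfrak a]^{-1}=[\mathfrak u]$, as required.

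The main obstacle is the careful local computation of $v(u_v)$ at places where $y^2+Qy=P$ is not locally minimal, as well as for $Q\ne 0$ at primes above~$2$. At such places the minimization formula acquires extra place-dependent terms that depend only on~$(C,P)$ and not on~$d$; since these contributions are fixed, they are absorbed into the constant shift by $[\mathfrak u_0]$, and the construction of $d$ above still realizes any prescribed pointed Weierstrass class.
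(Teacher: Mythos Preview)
Your approach is the same as the paper's, which simply defers to Silverman's argument for elliptic curves. But you have skipped precisely the one step the paper singles out as requiring proof in higher genus: the compatibility of the minimal pointed Weierstrass model with \'etale base change (Proposition~\ref{minimal-bc}).

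The problem is your assertion that the extra local contributions at bad places (and at primes above~$2$) ``depend only on~$(C,P)$ and not on~$d$'', so that they may be absorbed into~$[\mathfrak u_0]$. Consider a bad place $v\nmid 2$ where you have arranged $v(d)$ to be even. Over $K_v$ the twist $C^d$ is isomorphic either to $C$ or to $C^{\epsilon}$ for a fixed non-square unit~$\epsilon$, according to the class of $d$ in $\cO_v^*/(\cO_v^*)^2$; these two curves become isomorphic only over the \emph{unramified} quadratic extension $K_v(\sqrt{\epsilon})$. To conclude that they have the same minimal discriminant valuation --- hence the same local $v(u_v)$ --- you must know that the minimal pointed Weierstrass model over $\cO_v$ remains minimal after the \'etale base change to $\cO_{K_v(\sqrt{\epsilon})}$. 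Otherwise the correction at~$v$ genuinely depends on the square class of $d$ at~$v$, which you do not control, and it cannot be absorbed into a fixed ideal class.

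The paper isolates exactly this step and proves it separately, noting moreover that the analogous statement \emph{fails} for non-pointed minimal Weierstrass models when $g\ge 2$ (\cite{L_Tr}, Proposition~4). So this is not a routine verification that can be waved away; it is the substance of the proposition beyond the elliptic case.
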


\begin{proof} The proof is the same as for \cite{Sil-84}, Theorem 5.
  The only point we have to check is that the minimal pointed
  Weierstrass model commutes with \'etale base changes in higher
  genus. See Proposition~\ref{minimal-bc}. 
  Note that, in general, this is false for non-pointed minimal Weierstrass
  models in higher genus  (\cite{L_Tr}, Proposition 4, p. 4595). 
\end{proof}

\begin{remark} A referee raised the natural
  question to study, for each class $[\mathfrak a]$ in ${\mathrm{Cl}}(K)$,
   the density of the hyperelliptic curves over $K$ having
  (pointed or not) Weierstrass class equal to $[\mathfrak a]$, 
  indicating that this question is solved for elliptic
curves in \cite{B}.  Recent related works can be found in \cite{C-S}.  
\end{remark}
\end{section}

\begin{section}{Complements on minimal pointed Weierstrass models} 

  We prove the uniqueness of the minimal pointed Weierstrass model
  (see also Remark~\ref{also}) and its compatibility with \'etale base changes.
  These properties are local on $S=\Spec R$.
  \medskip 

  Suppose first that $R$ is a discrete valuation ring with 
  uniformizing element $t$, residue field $k$ and normalized valuation
  $v$.  
  Let $\cC, \cC'$ be two distinct Weierstrass models of $C$ over $S$, with
  respective quotients $\cZ, \cZ'$ by the hyperelliptic involution. Up to actions of
  $\mathrm{Gl}_2(R)$, there are parameters $x, x'$ of $\cZ, \cZ'$
  respectively  and $d>0$ such that $x=t^d x'$. Let $p\in \cC_k$ be a
  rational point over the zero of $x$ in $\cZ_k$. 

  \begin{lemma}  \label{disc-cp} 
    If $\cC$ is regular at $p$, then $v(\Delta_{\cC}) <
    v(\Delta_{\cC'})$. 
  \end{lemma}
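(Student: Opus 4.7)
The plan is to compare the two Weierstrass equations directly and reduce everything to a strict inequality on $v(b)$, where $b \in K^*$ is the scalar in the change of variables $y = by' + h(x)$ relating them. Writing the equations as $y^2 + Q(x)y = P(x)$ for $\cC$ in the affine chart $\Spec R[x]$ and $y'^2 + Q'(x')y' = P'(x')$ for $\cC'$ in $\Spec R[x']$, with $h(x) \in K[x]$ of degree $\le g+1$, the transformation formula for the hyperelliptic discriminant (Lemma~\ref{a-b-F-N}(1), referring to \cite{L_Tr}, p.~4581) applied with $a_s = t^d$ and $b_s = b$ yields
\[
v(\Delta_{\cC}) - v(\Delta_{\cC'}) = 4(2g+1) v(b) - 2d(g+1)(2g+1) = 2(2g+1)\bigl(2v(b) - d(g+1)\bigr).
\]
So the lemma is equivalent to the strict inequality $2v(b) < d(g+1)$.

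To control $v(b)$, I expand
\[
Q'(x') = \frac{2h(t^d x') + Q(t^d x')}{b}, \qquad P'(x') = \frac{P(t^d x') - h(t^d x')^2 - Q(t^d x')h(t^d x')}{b^2}
\]
coefficient by coefficient. The conditions $Q', P' \in R[x']$ translate into $v(2h_i + q_i) \ge v(b) - di$ for $0 \le i \le g+1$ and $v(r_j) \ge 2v(b) - dj$ for $0 \le j \le 2g+2$, where $r_j$ is the $j$th coefficient of $R(x) := P(x) - h(x)^2 - Q(x)h(x)$. Assuming for contradiction that $2v(b) \ge d(g+1)$, these specialise in particular to $v(2h_0 + q_0) \ge 1$, $v(r_0) \ge d(g+1) \ge 2$, and $v(r_1) \ge dg \ge 1$; moreover, the explicit formulas $h_i = (bq'_i t^{-di} - q_i)/2$ give $h_0, h_1 \in R$ (at least in residue characteristic different from $2$).

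Now I show that, under this hypothesis, $\cC$ fails to be regular at $p$. Writing $p = V(t, x, y - \tilde y_0)$ with $\tilde y_0 \in R$ a lift of the root $\bar y_0 \in k$ of the fibre equation $y^2 + \bar q_0 y - \bar p_0 \in k[y]$, the congruence $r_0 \equiv 0 \pmod t$ gives $\bar p_0 \equiv \bar h_0^2 + \bar q_0 \bar h_0$, so the fibre equation factors as $(y - \bar h_0)(y + \bar h_0 + \bar q_0) \equiv 0$; thus $\tilde y_0 \equiv h_0$ or $\tilde y_0 \equiv -h_0 - q_0$ modulo $t$. Expanding
\[
f = y^2 + Q(x)y - P(x) \equiv f(0, \tilde y_0) + (2\tilde y_0 + q_0)(y - \tilde y_0) + (q_1 \tilde y_0 - p_1)\, x \pmod{\mathfrak m_p^2}
\]
and substituting either value of $\tilde y_0$ shows, using the inequalities $v(2h_0 + q_0) \ge 1$, $v(r_0) \ge 2$ and $v(r_1) \ge 1$, that each of $f(0,\tilde y_0)$, $2\tilde y_0 + q_0$, $q_1 \tilde y_0 - p_1$ lies in $(t^2)$, $(t)$, $(t)$ respectively. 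Hence $f \in \mathfrak m_p^2$ inside $R[x,y]_{(t,\,x,\,y-\tilde y_0)}$, contradicting the regularity of $\cO_{\cC,p}$.

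The main obstacle I expect is the residue characteristic $2$ case: the formula $h(x) = (bQ'(t^{-d}x) - Q(x))/2$ no longer has integral denominators in mixed characteristic $(0,2)$, so the claim $h_0, h_1 \in R$ becomes more delicate, and the linear coefficient $2\tilde y_0 + q_0$ degenerates to $q_0$ in the regularity analysis. Handling this uniformly may require a separate argument exploiting the Artin--Schreier form of the cover $\cC \to \cZ$ in residue characteristic $2$, or a more geometric argument that dominates both $\cC$ and $\cC'$ by a common regular model.
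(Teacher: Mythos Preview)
Your direct approach --- reducing to the inequality $2v(b) < d(g+1)$ via the discriminant transformation formula and then contradicting regularity at $p$ under the assumption $2v(b) \ge d(g+1)$ --- is sound in residue characteristic different from $2$, and genuinely different from the paper's argument. The paper does not compare $\cC$ and $\cC'$ in one shot; instead it interpolates the chain of intermediate Weierstrass models $\cC_0 = \cC, \cC_1, \ldots, \cC_d = \cC'$ with parameters $x_i = x/t^i$, and shows $v(\Delta_{\cC_i}) \le v(\Delta_{\cC_{i+1}})$ for each $i$, with strict inequality at $i = 0$. At each step only the substitution $x_i \mapsto t x_{i+1}$ (possibly together with $y \mapsto y/t$) is made, so the resulting equation can be checked to be normal by the explicit criterion of \cite{L_Tr}, Lemme~2, and the discriminant jump is read off directly. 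Amusingly, the paper only writes out the characteristic~$2$ case and declares the others ``similar and easier'', so your write-up and the paper's are in a sense complementary.

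The characteristic~$2$ gap you flag is real and not merely cosmetic. Without $v(h_0), v(h_1) \ge 0$, your identity $q_1\tilde y_0 - p_1 \equiv -(2h_0 + q_0)h_1 \pmod t$ is no longer controlled, and the bound $v(2h_0 + q_0) \ge 1$ no longer forces anything about $\bar q_0$ in $k$. The paper's method sidesteps this entirely: it first translates $y$ by an element of $R$ so that $p$ sits at the origin of the affine chart (so the constant term of $P$ lies in $tR$ to begin with), after which regularity at $p$ becomes the concrete statement that one of the three lowest coefficients $a_0, a_1, a_2$ is a unit. This is exactly what is needed to verify by hand, in characteristic~$2$, that the once-substituted equation already defines the normal scheme $\cC_1$, with $v(\Delta_{\cC_1}) = v(\Delta_{\cC}) + 2(2g+1)(g+1)$. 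If you want to push your direct argument through in characteristic~$2$, the natural move is to replace the unknown $h \in K[x]$ by a preliminary integral shift of $y$ placing $p$ at the origin, so that the low-order coefficients of $P, Q$ carry the regularity information directly; but at that point you have essentially reproduced the $d=1$ step of the paper's induction, and handling the remaining $d-1$ steps one at a time is simpler than bounding $v(b)$ globally.
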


  \begin{proof} This is a special case of \cite{L_Tr}, Proposition 3
    (iii)-(iv), p. 4594. However, {\it op. cit}. is written under the
    hypothesis that $k$ is perfect. So we give an {\it ad hoc} proof 
    here.
    
    We suppose $\mathrm{char}(k)=2$. The other cases
    are similar and easier to deal with. For $0\le i\le d$, denote by
    $\cZ_i$ the smooth model of $\PP^1_K$ with parameter $x_i=x/t^i$ and
    by $\cC_i$ the normalization of $\cZ_i$ in $K(C)$. So $\cC=\cC_0$
    and $\cC'=\cC_d$. We are going to
    prove that $v(\Delta_{\cC_i})\le v(\Delta_{\cC_{i+1}})$ for any
      $0\le i\le d-1$ with strict inequality when $i=0$. 

    Let 
    $$y^2+(a_1+a_3x+...)y=t a_0 + a_2 x + a_4x^2+ ..., \quad a_i\in R$$
    be an affine equation of $\cC$ with $p$ corresponding to $x=y=0$. As
    $\cC$ is regular at $p$, at least one of $a_0, a_1, a_2$ is a unit in
    $R$.
Using \cite{L_Tr}, Lemme 2, p. 4582, we verify that the equation 
    $$ y^2+(a_1+ta_3x_1+...)y=t (a_0 + a_2 x_1 + ta_4 x_1^2+...).$$
    defines an affine normal scheme, hence it is an affine equation of
    $\cC_1$ and we have 
$$v(\Delta_{\cC_1})=v(\Delta_{\cC})+ 2(2g+1)(g+1)>v(\Delta_{\cC}).$$ 
Suppose that $d\ge 2$. Let $p_1\in \cC_1(k)$ be a point corresponding
to $x_1=0$. Then 
$\cC_1$ is regular at $p_1$ if either $a_0$ or $a_1\in R^*$ and we
can then continue with induction. 
If $a_0, a_1\in tR$ and $a_2\in R^*$, then the above equation becomes 
$$ y_1^2+(t^{-1}a_1+ta_3x_2+\cdots)y_1=t^{-1}a_0 + a_2 x_2 + t^2a_4x_2^2+  \cdots,$$
where $y_1=y/t$. Again this defines a normal scheme, hence $\cC_2$. 
We have $v(\Delta_{\cC_1})=v(\Delta_{\cC_2})+2(2g+1)(g-1)\ge v(\Delta_{\cC_2})$. 
As $\cC_2$ is smooth at $x_2=0$, we can also continue with induction
if $d\ge 3$.     
  \end{proof}

\begin{corollary} \label{pmw-unique} Let $C$ be a hyperelliptic curve over $K$ and let
  $P\in C(K)$ be a Weierstrass point. Then $(C, P)$ admits a unique 
  minimal pointed Weierstrass model of $(C, P)$ over $S$. 
\end{corollary}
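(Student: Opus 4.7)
Both existence and uniqueness are local on $S$: minimality of the discriminant ideal is a place-by-place condition, and because the section $\overline{\{P\}}$ rigidifies the geometry, pointed $\cO_{S,s}$-isomorphisms at every closed point glue to a pointed $S$-isomorphism. So I would reduce immediately to the case where $R$ is a DVR with uniformizer $t$ and valuation $v$.

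\emph{Existence.} Over a DVR, every pointed Weierstrass model is defined by a pointed integral Weierstrass equation (the pointed analog of Lemma~\ref{loc-W}; see also \cite{Lock}, Proposition~2.8). Starting from any such equation, I would run the classical minimization procedure: iteratively apply admissible changes of variable $x\mapsto u^2 x+r$, $y\mapsto u^{2g+1} y+s(x)$ that strictly decrease $v(\Delta)$. The procedure terminates because $v(\Delta)\ge 0$, and the output defines a minimal pointed Weierstrass model.

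\emph{Uniqueness.} Given two minimal pointed Weierstrass models $\cC,\cC'$ with pointed integral Weierstrass equations $y^2+Q(x)y=P(x)$ and $y'^2+Q'(x')y'=P'(x')$, the two equations are related by a $K$-admissible change
$$x=u^2 x'+r,\quad y=u^{2g+1} y'+s(x),\qquad u\in K^*,\ r\in K,\ s\in K[x]_{\le g},$$
the exponents on $u$ being fixed by the pointed convention that $P$ be monic of degree $2g+1$. The discriminants compare by $v(\Delta_{\cC})=v(\Delta_{\cC'})+4g(2g+1)\,v(u)$ (this can be read off from Proposition~\ref{pointed-main}(1) with $\mathfrak a=\mathfrak u^{-2}$, $\mathfrak b=\mathfrak u^{-(2g+1)}$, or directly from \cite{L_Tr}, bottom of page~4581); minimality of both forces $v(u)=0$, i.e.\ $u\in R^*$.

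The main obstacle is then to show that automatically $r\in R$ and $s\in R[x]_{\le g}$, so that the change of variables is $R$-admissible and yields an $R$-isomorphism $\cC\simeq\cC'$ of pointed Weierstrass models. My plan for this is a valuation estimate on
$$P'(x')=u^{-2(2g+1)}\bigl(P-Qs-s^2\bigr)(u^2 x'+r)\in R[x'].$$
If $v(r)=-m<0$, the constant term $P'(0)$ equals $u^{-2(2g+1)}\bigl(P(r)-Q(r)s(r)-s(r)^2\bigr)$; since $P$ is monic of degree $2g+1$ with all other coefficients in $R$, the leading $r^{2g+1}$ dominates and gives $v(P(r))=-m(2g+1)$, while $v(Q(r)s(r)+s(r)^2)\ge -2mg>-m(2g+1)$ as $\deg Q,\deg s\le g$. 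Thus $v(P'(0))=-m(2g+1)<0$, contradicting $P'(x')\in R[x']$, so $r\in R$. An analogous estimate on $Q'(x')=u^{-(2g+1)}\bigl(2s(u^2x'+r)+Q(u^2x'+r)\bigr)$ then forces $s\in R[x]_{\le g}$. Once the change of variables is $R$-admissible, $\cC$ and $\cC'$ are $R$-isomorphic as pointed Weierstrass models, and gluing back to $S$ completes the uniqueness.
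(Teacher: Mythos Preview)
Your reduction to the local case and the existence argument are fine. The uniqueness argument, however, has a real gap at the step ``$r\in R$''. You claim
\[
v\bigl(Q(r)s(r)+s(r)^2\bigr)\ \ge\ -2mg
\]
``as $\deg Q,\deg s\le g$'', but a degree bound only converts into a valuation bound when the \emph{coefficients} are known to lie in $R$. At this stage $s\in K[x]$ is completely unconstrained: nothing prevents $s$ from having, say, constant term $t^{-N}$ with $N$ arbitrarily large, and then $v(s(r)^2)\le -2N$ can be far below $-m(2g+1)$. So the displayed inequality is simply unjustified, and the contradiction with $P'(0)\in R$ does not follow.

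A partial repair is available when $2$ is a unit in $R$: combine $4P'+Q'^2=u^{-2(2g+1)}(4P+Q^2)(u^2x'+r)$, which eliminates $s$ entirely, and use that $4P+Q^2\in R[x]$ has leading coefficient the unit $4$ to force $r\in R$. But in residue characteristic~$2$ the leading coefficient $4$ is no longer a unit and this route breaks down; your ``analogous estimate on $Q'$'' has the same problem, since it only gives $2s\in R[x]$ once $r\in R$, and one still needs an integrality argument (e.g.\ $s$ satisfies $T^2+QT-(Qs+s^2)=0$ over the integrally closed ring $R[x]$) to conclude $s\in R[x]$.

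The paper takes a different and more robust route that handles all residue characteristics uniformly. Rather than trying to force the change of variables to be $R$-admissible, it proves (Lemma~\ref{disc-cp}) that whenever two pointed Weierstrass models are genuinely distinct one can, after a $\mathrm{GL}_2(R)$-normalization, write $x=t^d x'$ with $d>0$, and then regularity of one model at the specialization $p_0$ of $P$ forces a \emph{strict} inequality $v(\Delta_{\cC})<v(\Delta_{\cC'})$ (or the reverse). Thus distinct pointed Weierstrass models always have distinct discriminant valuations, so the minimal one is unique. The advantage of this geometric comparison is that it never needs to control $s$ directly and the characteristic~$2$ case is handled by an explicit local computation inside the lemma.
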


\begin{proof} The existence is obvious when $S$ is local. In the 
general case $C$ has good reduction over a dense open subset $U$ of
$S$. Then it is enough to glue local minimal pointed Weierstrass
models 
over the points of $S\setminus U$ with the smooth model over $U$. 

Now we prove the uniqueness. One can suppose $S$ is local. Let 
$\cC, \cC'$ be two distinct pointed Weierstrass models of $(C, P)$. 
We keep the notation preceding Lemma~\ref{disc-cp}.
Let $p_0$ be the specialization of $P$ in $\cC_k$. This is a smooth
point by definition. If $p_0=p$, then
$v(\Delta_{\cC})< v(\Delta_{\cC'})$ by the same lemma. Otherwise,
$x(P)\notin tR$ (may be $\infty$). 
This implies that $(x')^{-1}(P)=t^dx(P)^{-1}\in tR$. Hence the 
specialization of $P$ in $\cZ'_k$ is the zero of $x'^{-1}=t^d x^{-1}$
and Lemma~\ref{disc-cp} implies that
$v(\Delta_{\cC'})<v(\Delta_{\cC})$. In other words, two distinct
pointed Weierstrass models always have distinct discriminant ideals.
Therefore the minimal one is unique. 
\end{proof}

\begin{remark} \label{also} 
This corollary also follows from \cite{Lock}, Remark after Definition 2.1
(but whose proof is left to readers). Note that the non-pointed
minimal Weierstrass model is not unique in general even when $C$ has a
rational Weierstrass point. 
\end{remark}

\begin{proposition}[\'Etale base change] \label{minimal-bc}
  Let $(C, P)$ be a pointed hyperelliptic curve over $K$. Let $\cC$ be the minimal 
  pointed Weierstrass model of $(C, P)$ over $S$. 
  Let $S'\to S$ be an \'etale morphism of affine Dedekind schemes and 
let $K'=K(S')$. 
  Then $\cC\times_S S'$ is the minimal
  pointed Weierstrass model of $(C_{K'}, P)$ over $S'$.   
\end{proposition}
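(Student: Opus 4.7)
The statement is local on $S'$, so I reduce to the case where $S = \Spec R$ and $S' = \Spec R'$ with $R, R'$ DVRs and $R \to R'$ an étale local extension; in particular $R \to R'$ is unramified, the uniformizer of $R$ remains one of $R'$, and $v'|_R = v$. Since discriminant valuations and the minimality property are preserved under further unramified base extensions, enlarging $R'$ to its Galois closure $\tilde R$ over $R$ reduces the problem to that extension; thus I may assume from the outset that $R \to R'$ is Galois étale with finite group $G$.

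Let $\cC''$ be the unique minimal pointed Weierstrass model of $(C_{K'}, P)$ over $R'$ given by Corollary~\ref{pmw-unique}. The strategy is to descend $\cC''$ along $R \to R'$ by Galois descent and thereby obtain a pointed Weierstrass model $\cC_0$ of $(C, P)$ over $R$ with the same discriminant valuation. For each $g \in G$, the pullback $g^*\cC''$ is again a pointed Weierstrass model of $(C_{K'}, P)$ (as $C$ and $P$ are defined over $K$, hence $g$-invariant), and $v'(\Delta_{g^*\cC''}) = v'(\Delta_{\cC''})$ since $g$ fixes the maximal ideal of $R'$; thus $g^*\cC''$ is also minimal and Corollary~\ref{pmw-unique} yields an isomorphism with $\cC''$. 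I normalize this to the unique $\phi_g \colon g^*\cC'' \xrightarrow{\sim} \cC''$ inducing the identity on the generic fiber $C_{K'}$, using that any automorphism of $(C_{K'}, P)$ extends to an $R'$-automorphism of the minimal $\cC''$ (apply uniqueness to its pullback under the automorphism). Canonicity forces the cocycle condition, and effective Galois descent for the projective scheme $\cC''$, together with its $G$-invariant hyperelliptic involution and Weierstrass section, yields the desired $\cC_0$ over $R$ with $\cC_0 \times_R R' \simeq \cC''$.

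Because $R \to R'$ is unramified, $v(\Delta_{\cC_0}) = v'(\Delta_{\cC_0 \times_R R'}) = v'(\Delta_{\cC''})$. Minimality of $\cC$ over $R$ then gives $v(\Delta_\cC) \leq v(\Delta_{\cC_0}) = v'(\Delta_{\cC''})$, while minimality of $\cC''$ over $R'$ gives $v'(\Delta_{\cC''}) \leq v'(\Delta_{\cC \times_S S'}) = v(\Delta_\cC)$. Hence all three valuations coincide, so $\cC \times_S S'$ is itself minimal over $S'$, and Corollary~\ref{pmw-unique} gives $\cC \times_S S' \simeq \cC''$. The main obstacle is the rigidification step: the uniqueness of the minimal model only provides isomorphisms $g^*\cC'' \simeq \cC''$, and one must pin down a canonical choice $\phi_g$ whose family satisfies the cocycle condition required by effective descent; identifying the generic fiber of $g^*\cC''$ canonically with $C_{K'}$ (since $C$ is defined over $K$) provides this rigidification.
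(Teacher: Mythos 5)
Your Galois-descent argument over a genuinely Galois \'etale extension is sound in outline (it parallels the paper's second step, which realizes the descent concretely as the quotient $\cC'/\Gamma$), but the reduction you use to get into that situation has a real gap. You pass from $R\to R'$ to ``its Galois closure $\tilde R$ over $R$'' and assert that discriminant valuations and minimality are preserved because this further extension is unramified. That is false in general: if $R'/R$ is an \'etale local extension of discrete valuation rings, the Galois closure $\tilde K$ of $K'/K$ may be ramified at every prime above $\m_R$. For instance, take $K=\Q$, $K'=\Q[x]/(x^3-x-1)$ and $p=23$, which factors in $\cO_{K'}$ as $\mathfrak p_1^2\mathfrak p_2$; then $R'=(\cO_{K'})_{\mathfrak p_2}$ is \'etale over $R=\Z_{(23)}$, but every prime of the Galois closure above $23$ has ramification index $2$, so no Galois \'etale local extension of $R$ contains $R'$ at all. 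Since minimality of Weierstrass models is notoriously not preserved under ramified base change, you cannot ``prove the statement for $\tilde R$ and deduce it for $R'$'' this way, and the cocycle/rigidification step you flag as the main obstacle is not in fact where the difficulty lies.

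The paper's proof supplies exactly the missing ingredient: it first treats the case where the residue extension of $R'/R$ is trivial by an approximation argument (using that $R$ is dense in $R'$ to replace a parameter $x'$ of $\cZ'$ by one defined over $K$, so that every pointed Weierstrass model over $R'$ descends to $R$), which shows the minimal pointed model commutes with henselization. Only after replacing $R$ by its henselization is the integral closure in the Galois closure of $K'/K$ an unramified discrete valuation ring, and then the Galois argument (your second paragraph, or the paper's quotient $\cC_0=\cC'/\Gamma$) applies. You would need to add this first step; as a secondary point, your appeal to ``effective Galois descent for the projective scheme'' also needs justification (a $\Gamma$-linearized ample sheaf, or simply forming the quotient as the paper does), since effectivity is not automatic from the cocycle condition alone.
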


\begin{proof} The property is local on $S'$, so we can suppose that $S$
  and $S'=\Spec R'$ are local.
The base change $\cC\times_S S'$ is a normal scheme because $S'/S$ is
\'etale. So $\cC\times_S S'$ is a pointed Weierstrass model of
$(C_{K'}, P)$. 
\medskip

(1) {\it First suppose that the residue extension of $S'/S$ is
  trivial}. Then $R$ is dense in $R'$. Let $\cC'$ be a pointed 
Weierstrass model over $S'$. We will show that it is defined over
$S$. Let $\cZ'$ be its quotient by the
hyperelliptic involution. Let $x'$ be a parameter of $\cZ'$ whose
pole divisor is the Zariski closure $\overline{\{ Q\}}\subset \cZ'$, where
$Q$ is the image of $P$ in $\PP^1_K(K)$. As $R$ is dense in $R'$, we can find 
$x\in K(\PP^1_K)$ such that $x'=x + a'x'+b'$ with $a', b'\in \m_{R'}$. 
Thus $R'[x']=R'[x]=R[x]\otimes_R R'$ and $\cZ'=\cZ \times_S S'$ for
some smooth model $\cZ$ of $\PP^1_K$. The normalization $\cC_0$ of
$\cZ$ in $K(C)$ is a pointed Weierstrass model of $(C, P)$ over $S$ and $\cC_0\times_S S'$ is equal
to $\cC'$ because the former is normal. Therefore
$$v(\Delta_{\cC'})=v(\Delta_{\cC_0})\ge v(\Delta_{\cC})=
v(\Delta_{\cC\times_S S'})$$
and $\cC\times_S S'$ is minimal. 

(2) In the general case, we can first replace $R$ with its
henselization (the first step shows that $\cC$ commutes with
henselization of $R$). The integral closure of $R$ in the Galois
closure of $K'/K$ is a discrete valuation ring unramified over $R$. So
we can suppose that $S'/S$ is Galois with Galois group $\Gamma$.
Let $\cC'$ be the minimal pointed Weierstrass model of $(C_{K'},
P)$
over $S'$. As $P$ is invariant by $\Gamma$ and the minimal pointed 
Weierstrass model is unique, $\Gamma$ acts on $\cC'$ (as $S$-scheme)
and on the quotient of $\cC'$ by the hyperelliptic involution. The quotient 
$\cC_0:=\cC'/\Gamma$ is a Weierstrass model of $C$ over 
$S$. As $S'/S$ is \'etale, $\cC_0\times_S S'$ is normal and
$\cC'\to \cC_0\times_S S'$ is an isomorphism. Hence $\cC$ is smooth
along the Zariski closure of $P$ and is a pointed Weierstrass model
of $(C, P)$. 
Moreover, $v(\Delta_{\cC'})=v(\Delta_{\cC_0})\ge v(\Delta_{\cC})$.
So $\cC'\simeq \cC\times_S S'$. 
\end{proof}

\end{section}

\begin{section}{Examples} \label{examples} 
  
We construct examples of  Weierstrass models not defined by integral
(or pointed integral)  Weierstrass equations, but with various nice
properties. We will work with number fields $K$.  
Let us summarize first the examples we will construct.

  \begin{proposition} \label{examples-s} There exist number fields
    $K$, hyperelliptic curves $C$ over $K$ with Weierstrass model
    $\cC$ over $S=\Spec\cO_K$, of any of the following types: 
    \begin{enumerate}[{\rm (a)}] 
\item 
$$\cZ\not\simeq \PP^1_S, \quad \Delta_{\cC}\simeq \cO_K, \quad 
\det f_*\omega_{\cC/S}\simeq \cO_S;$$ 
\item 
$$\cZ\simeq \PP^1_S, \quad \Delta_{\cC}\simeq\cO_K, \quad 
\det f_*\omega_{\cC/S}\not \simeq \cO_S;$$ 
\item 
  $$\cZ \simeq \PP^1_S, \quad \Delta_{\cC}\not\simeq \cO_K, \quad
  \det f_*\omega_{\cC/S}\simeq \cO_S;$$ 
(by Proposition~\ref{obvious}, in all the above situations, $\cC$ is not defined by an integral Weierstrass equation.) 
\item 
  $$\cZ\simeq \PP^1_S, \quad \Delta_\cC\simeq \cO_K, \quad
  \det f_*\omega_{\cC/S}\simeq \cO_S$$
  but $\cC$ is not defined by an integral Weierstrass equation; 
\item 
  $\cC$ is a minimal pointed Weierstrass model, defined by an integral
  Weierstrass equation but not by a pointed integral Weierstrass
  equation. 
    \end{enumerate}
  \end{proposition}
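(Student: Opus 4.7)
The heart of the plan for parts (a)--(d) is Lemma~\ref{a-b-F-N}, which, once a section $D\in \cZ(S)$ is fixed, expresses the four relevant invariants $\det F$, $N$, $\Delta_\cC$ and $\det f_*\omega_{\cC/S}$ as explicit $\Z$-linear combinations (in additive notation in $\Pic(S)$) of two classes $[\mathfrak a]$, $[\mathfrak b]$ attached to $\cC$. My strategy is to reverse-engineer this dictionary: for each prescribed pattern of (non)triviality among the invariants I first solve the corresponding small linear system in $\Pic(S)$, then exhibit a number field $K$ whose class group realises the required classes, and finally build a Weierstrass model of the desired type. The construction is a gluing: start from an integral Weierstrass equation of $C$ over a dense open $U\subseteq S$ of good reduction, and at each $s\in S\setminus U$ glue in a local integral equation obtained by the substitutions $x=a_sx_s$, $y=b_sy_s$ with $a_s,b_s$ local generators of $\mathfrak a,\mathfrak b$. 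The one delicate check is normality of the local equations; I would secure it by taking the initial equation sufficiently generic and applying the criterion of \cite{L_Tr}, \S 3, Lemme 2.

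For (a) I would take $g$ odd (so that $(g+1)/2\in\Z$ kills the 2-torsion ambiguity in the system) and $K$ with a non-square class $[\mathfrak a]$, setting $[\mathfrak b]=\tfrac{g+1}{2}[\mathfrak a]$. The simplest instance is $g=3$ with $K=\Q(\sqrt{-5})$ (class group $\Z/2\Z$), taking $[\mathfrak a]$ the nontrivial class and $[\mathfrak b]=0$. For (b) and (c) I would similarly impose one of the two linear conditions coming from $\Delta_\cC$ or $\det f_*\omega_{\cC/S}$ being trivial while violating the other; both are routine small-rank linear algebra in a well-chosen class group.

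For (d), Lemma~\ref{a-b-F-N}(3) is obstructive: if $4\nmid g$, then $\langle[\mathfrak w]\rangle=\langle\Delta_\cC,\det f_*\omega_{\cC/S}\rangle$ in $\Pic(S)$, so the three necessary conditions force $[\mathfrak w]=0$, and $\cC$ is integral by Proposition~\ref{global-L}. I therefore take $g=4$; the invariants become $[\mathfrak w]=5[\mathfrak a]-2[\mathfrak b]$, $\Delta_\cC\simeq 18[\mathfrak w]$, $\det f_*\omega_{\cC/S}\simeq -2[\mathfrak w]$, so I need $[\mathfrak w]$ of order exactly $2$ together with $[\mathfrak a]$ a square. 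A class group of the form $\Z/4\Z$ (realised by many imaginary quadratic fields, e.g.\ $\Q(\sqrt{-39})$) suffices: a generator $[\mathfrak c]$ gives $[\mathfrak a]=2[\mathfrak c]$, $[\mathfrak b]=0$, and then $[\mathfrak w]=10[\mathfrak c]=2[\mathfrak c]\neq 0$ is $2$-torsion, as required.

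For (e) I would apply Proposition~\ref{twist} directly. Take $g$ odd, any pointed hyperelliptic curve $(C,P)$ over a number field $K$ admitting a fractional ideal $\mathfrak u$ with $[\mathfrak u^g]=0$ but $[\mathfrak u]\neq 0$ (for instance $g=3$ and $K=\Q(\sqrt{-23})$ of class number~$3$). The proposition produces a quadratic twist whose minimal pointed Weierstrass model $\cC$ has pointed Weierstrass class $[\mathfrak u]\neq 0$; by Proposition~\ref{pointed-main}(2), $\cC$ is \emph{not} defined by a pointed integral equation. On the other hand, by Definition~\ref{p-w-c} the non-pointed Weierstrass class equals $g[\mathfrak u]=0$, and combined with $\cZ\simeq \PP^1_S$ from Proposition~\ref{pointed-main}(3), Proposition~\ref{global-L} ensures that $\cC$ \emph{is} defined by an integral Weierstrass equation. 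The main difficulty, common to all parts, is not conceptual but bookkeeping: matching the various ideal classes to the prescribed triviality/nontriviality pattern while keeping the local equations normal.
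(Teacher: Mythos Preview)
Your plan for (e) is essentially the paper's: both invoke Proposition~\ref{twist} to realize a prescribed pointed Weierstrass class $[\mathfrak u]$ and then read off the (non-)pointed invariants. The paper also handles (c) and (d) entirely via this pointed route --- for (c) one takes $g\not\equiv 0\pmod 4$ and $[\mathfrak u]$ of order $g^2$, for (d) one takes $4\mid g$ and $[\mathfrak u]$ of order $4g$ --- so you could simply adopt that instead of your gluing for those two cases.

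The genuine gap is in your construction for (a) and (b) (and in your alternative route to (d)). You correctly identify, via Lemma~\ref{a-b-F-N}, what the target classes $[\mathfrak a],[\mathfrak b]\in\Pic(S)$ must be. But the ``reverse-engineering'' step --- actually producing a Weierstrass model realizing a prescribed pair $([\mathfrak a],[\mathfrak b])$ --- is not carried out. Concretely: having fixed $K$ for its class group and a finite set $T$ supporting $\mathfrak a,\mathfrak b$, you now need a hyperelliptic curve of the given genus over \emph{that specific} $K$ with an integral Weierstrass equation of good reduction on $U=S\setminus T$, such that at each $s\in T$ the substituted equation $y_s^2+b_s^{-1}Q(a_sx_s)y_s=b_s^{-2}P(a_sx_s)$ is both integral over $\cO_{S,s}$ and normal. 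None of these is automatic, and ``sufficiently generic'' does not settle them: the existence of such a curve over a small field like $\Q(\sqrt{-5})$ with good reduction outside one prescribed non-principal prime is already a nontrivial arithmetic statement, and integrality/normality of the substituted equation imposes further inequalities on $v_s(a_s),v_s(b_s)$ and the coefficients that you have not checked are compatible with your targets.

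The paper sidesteps this by an explicit geometric construction for (a) and (b): one first enlarges $K$ so as to have simultaneously a genus-$g_0$ curve $C_0$ with everywhere good reduction \emph{and} the desired $2$-part in $\mathrm{Cl}(K)$ (Lemma~\ref{2-tor}); then, for a chosen class $[I]$ of order $d$, one forms $\cZ=\PP(\cO_K\oplus I)$ and pulls $\cC_0$ back along the $d$-th-power map $\cZ\to\cZ_0=\PP(\cO_K\oplus I^d)$. Lemma~\ref{d-th-power} computes the discriminant exactly and normality is verified directly, so one obtains an honest Weierstrass model with $N\simeq\cO_K$ and $\det F\simeq I$, hitting the required pattern in $\Pic(S)$ without any existence hypotheses on curves over a fixed $K$.
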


  \begin{remark} The genus $g$ of the curves $C$ appearing in our construction 
    is not arbitrary. For (a) and for (b) there are
    infinitely many possible $g$ (always odd).  If we relax the
    condition $\cZ\simeq \PP^1_S$ in (b),  we find all odd integers
    $g\ge 5$ except those of the form $2^n-1$.
    For (c), $g$ can be any integer different from $1, 2$ and $4$. 
    For (d), $g$ can be any integer divisible by $4$. Finally, 
    for (e), $g$ can be any integer $\ge 2$.
  \end{remark}
  
\begin{example} 
Fix $g_0\ge 1$. We start with a hyperelliptic curve $C_0$ of genus
$g_0\ge 1$ over a number field $K$ with everywhere good
reduction. Such a curve always exists by enlarging $K$ if
necessary.\footnote{For some special values of $g_0$, we can
give explicit examples as follows. 
Let $r\ge 3$ be an odd square-free integer. Consider the 
hyperelliptic curve $C_0$ over $\mathbb Q$ defined by $ y^2+y=x^r$.
If $\xi_r\in \mathbb C$ is a primitive $r$-th root of
the unity and if $m\in \Z$ satisfies $4m\equiv -1 \mod r^{(r+1)/2}$, then
$C_0$ has everywhere good reduction over $K=\Q[m^{1/r},
\sqrt{1-\xi_r}]$. The corresponding smooth model $\cC_0$ is defined by an
integral global Weierstrass equation, and $(1-\xi_r)^{-1}(x-m^{1/r})$
is a parameter of $\cZ_0$.} Let $\cC_0$ be its smooth projective model over $S=\Spec\cO_K$,
and let $\cZ_0$ be the corresponding projective bundle over $S$.
Extending $K$ if necessary, we can suppose that
$\cZ_0\simeq \PP^1_S$. 
Let $B\subset \cZ_0$ be the branch locus of $\cC_0\to \cZ_0$. This
is a closed subset finite over $S$. By \cite{GLL}, Theorem 7.2, there
exists an integral point $D_0$ in $\cZ_0 \setminus  B$. 
Applying again the theorem to $\cZ_0\setminus (B\cup D_0)$,
we find an integral point $D_\infty$ such that $B$, $D_0$ and
$D_\infty$ are pairwise disjoint. Extending $K$ if necessary, we can suppose that 
$\cC_0$ is defined by an integral Weierstrass equation
$$ y^2+Q(x)y=P(x), $$
and $D_0, D_\infty$ are respectively the zero and pole divisors of $x$. 
Write
$$4P(x)+Q(x)^2=a_0x^{2g_0+2}+\cdots+a_{2g_0+2} \in \cO_K[x].$$ 
As $(D_0\cup D_\infty)\cap B=\emptyset$, we have $a_0, a_{2g_0+2}\in
\cO_K^*$. 

Extending again $K$, 
we can suppose that the class group $\Pic(S)$ of $K$  contains a
maximal cyclic $2$-subgroup $H_2\subseteq \Pic(S)$ of order $\ge 4$
(see Lemma~\ref{2-tor} below). Let $[I]\in H_2$ (it will be chosen
more precisely later). We will use $[I]$ to
construct a hyperelliptic curve $C$ over $K$ together with a Weierstrass
model $\cC$ as double coverings of $\cC_0$. 
Let $d$ be the order of $[I]$. It is a power of $2$. 
As $I^d$ is principal, we 
identify $\PP(\cO_K\oplus I^d)$ with  $\cZ_0$, and the sections
corresponding to the projections of $\cO_K\oplus I^d\to \cO_K$ and
$\cO_K\oplus I^d\to I^d$ respectively to $D_0$ and $D_\infty$.
Consider now $\cZ:=\PP(\cO_K\oplus I)$ and the canonical morphism 
  $$\theta: \ \PP(\cO_K\oplus I)\to \PP(\cO_K \oplus I^d)=\cZ_0.$$
Locally on $S$ it corresponds to taking the $d$-th powers of the
homogeneous coordinates. Let 
$$\cC:= \cC_0\times_{\cZ_0} \cZ.$$ 
Let us study some properties of $\cC$. Let $e$ be a basis of $I^d$.
For any open subset $U\subseteq S$ such that $I|_U$ has a basis $e_U$, 
we have $e_U^d=\alpha_U^{-1} e$ with $\alpha_U\in \cO_S(U)^*$ a unit.  Then
$\cC_U$ is defined by the equation
$$
y^2+Q(\alpha_U t^{d})y=P(\alpha_U t^d).
$$
By Lemma~\ref{d-th-power} below, this equation defines a hyperelliptic
curve $C$ over $K$ of
odd genus
$g=d(g_0+1)-1$ and the discriminant ideal $\Delta_{\cC_U}$ defined by this equation satisfies
$$
\Delta_{\cC_U}=\pm (a_0a_{2g_0+2})^{d-1}\alpha_U^{(2g_0+2)((2g_0+2)d-1)}d^{2g+2}
\Delta^d_{(\cC_0)_U}=d^{2g+2}\cO_U. 
$$ 
Therefore $\Delta_{\cC}=d^{2g+2}\cO_K$ and $\cC\to S$ is smooth aways from
primes dividing $2$. At any prime $\mathfrak p$ of $\cO_K$ dividing $2$, as
$Q(x)$ is non-zero in $k(\mathfrak p)[x]$, the fiber $\cC_{\mathfrak p}$ is reduced, so
$\cC$ is normal (\cite{L_B}, Lemma 4.1.18) and is a Weierstrass model of $C$ over $S$. 

Let $D$ be the section of $\cZ$ corresponding to the projection
$\cO_K\oplus I\to I$. Then $\det \phi_*\cO_{\cZ}(D)=I$ and
$\theta(D)=D_\infty$. As $\cC$ is obtained by base change from
$\cC_0$, we have $N\simeq \cO_K$ (tensor product the exact sequence
\eqref{eq:L} corresponding to $\cC_0$ by $\cO_{\cZ}$).
By Lemma~\ref{a-b-F-N}(1), we have
$$\det f_*\omega_{\cC/S}\simeq  I^{g(g+1)/2}\simeq I^{d\times  \frac{(d(g_0+1)-1)(g_0+1)}{2}},$$
$[\mathfrak w]=[I]^{-d(g_0+1)/2}$ (and also $\Delta_{\cC}\simeq \cO_K$, but this is less precise then the above equality $\Delta_{\cC}=d^{2g+2}\cO_K$).
Therefore: 
\begin{enumerate}[{\rm (a)}]
\item if we take for $[I]$ a generator of
$H_2$ and if $g_0$ is odd, then $[I]$ is not a square, so 
$$\cZ\not\simeq \PP^1_S, \quad \Delta_{\cC}\simeq \cO_K, \quad 
\det f_*\omega_{\cC/S}\simeq \cO_S, \quad [\mathfrak w]=1;$$ 
\item if $[I]$ is the square of some element of 
$H_2$ and  if $g_0$ is even,  then 
$$\cZ\simeq \PP^1_S, \quad \Delta_{\cC}\simeq\cO_K, \quad 
\det f_*\omega_{\cC/S}\not \simeq \cO_S, \quad [\mathfrak w]\ne 1$$ 
\end{enumerate}
\end{example}

\begin{example} \label{ex-pointed}
  Let $g\ge 1$. By \cite{AC}, there exists a quadratic field $K$ such
  that $\mathrm{Cl}(K)$   contains a cyclic subgroup of order $4g^2$
  (see the beginning of the proof of Lemma~\ref{2-tor} below). Let $S=\Spec \cO_K$
  and let $[\mathfrak u]\in \Pic(S)$. By Proposition~\ref{twist}, there exists a
  pointed hyperelliptic curve $(C, P)$ over $K$ such that the pointed 
  Weierstrass class of the minimal pointed Weierstrass model $\cC$ is
  equal to $[\mathfrak u]$. 

  \begin{enumerate}[{\rm (a)}] \setcounter{enumi}{2}
  \item  If $g\not\equiv 0 \mod 4$, we choose
    $[\mathfrak u]$ of order $g^2$. Then $\cZ \simeq \PP^1_S$,
    $\Delta_{\cC}$ is not principal, and $\det f_*\omega_{\cC/S}$ is
    free (Proposition~\ref{pointed-main}(1)).  
\item  If $g\equiv 0 \mod 4$ and if we choose $[\mathfrak u]$ of order
  $4g$,   then $\cZ\simeq \PP^1_S$, $\Delta_\cC$ is principal and
  $\det f_*\omega_{\cC/S}$ is
  free. Let $[\mathfrak w]$ be the Weierstrass class of $\cC$. Then
  $[\mathfrak w]=[\mathfrak u]^{2g}$ (Definition~\ref{p-w-c}) is non-trivial. So $\cC$ is not defined by an
integral Weierstrass equation (Proposition~\ref{global-L}).
\item If $g\ge 2$, choose $[\mathfrak u]$ of order $g$. Then $[\mathfrak w]$ is trivial. 
As $\cZ\simeq\PP^1_S$, the model $\cC$ is defined by an integral Weierstrass
equation by Proposition~\ref{global-L}. But it is not defined by a 
pointed integral Weierstrass equation.
\end{enumerate}
\medskip

\noindent Note that in all the above examples 
except (c), $\Delta_{\cC}$ is principal but not equal to $\cO_K$ in
general. 
\end{example}

\begin{lemma}\label{2-tor} 
Let $K$ be a number field. Let $d\ge 2$. Then there exists an
extension $L/K$ (of degree at most $2$) such that the class group of
$L$ contains a cyclic subgroup of order $d$.
\end{lemma}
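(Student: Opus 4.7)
\noindent\emph{Proof proposal.} My plan is to reduce to the case $K=\Q$, where the analogue is a classical fact, and then transfer to $K$ via a compositum, controlling capitulation by the norm--extension identity on class groups.

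First I would invoke the cited result \cite{AC}: for every positive integer $N$, there exist infinitely many quadratic fields $M/\Q$ whose class group $\mathrm{Cl}(M)$ contains a cyclic subgroup of order $N$. Applying this with $N:=d\cdot[K:\Q]$, I obtain such an $M$ together with a class $[x]\in\mathrm{Cl}(M)$ of exact order $N$. Because $K$ has only finitely many subfields while infinitely many candidates $M$ are available, I may further arrange that $M\cap K=\Q$; then $L:=KM$ is a quadratic extension of $K$ with $[L:M]=[K:\Q]$.

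Next I would apply the standard identity $N_{L/M}\circ \iota_{L/M}=[L:M]$ on ideal classes, where $\iota_{L/M}:\mathrm{Cl}(M)\to\mathrm{Cl}(L)$ is extension of ideals and $N_{L/M}$ is the relative norm. Setting $y:=\iota_{L/M}([x])$, a relation $y^k=1$ in $\mathrm{Cl}(L)$ implies $[x]^{k[K:\Q]}=1$ in $\mathrm{Cl}(M)$, forcing $N\mid k[K:\Q]$, i.e.\ $d\mid k$. Hence $y$ has order divisible by $d$, and some power of $y$ generates a cyclic subgroup of order exactly $d$ inside $\mathrm{Cl}(L)$, as required.

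The main obstacle is the arithmetic input from \cite{AC}: once a quadratic field over $\Q$ whose class group realizes a cyclic subgroup of order $d\cdot[K:\Q]$ is in hand, the remaining steps---choosing $M$ linearly disjoint from $K$, forming the compositum $L=KM$, and invoking the norm identity to rule out capitulation---are short and purely formal class-field-theoretic manipulations.
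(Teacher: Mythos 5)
Your proof is correct and follows essentially the same route as the paper: apply Ankeny--Chowla with $n=d[K:\Q]$, pass to the compositum $L=KF$, and use the norm map $\mathrm{Cl}(L)\to\mathrm{Cl}(F)$ to see that the kernel of extension of classes is killed by $[L:F]\mid[K:\Q]$, so a class of order $d[K:\Q]$ survives with order divisible by $d$. Your extra step of choosing $M$ linearly disjoint from $K$ is harmless but unnecessary, since the argument only needs $[L:M]$ to divide $[K:\Q]$, which holds automatically.
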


\begin{proof} For any $n\ge 1$, in the proof of \cite{AC}, Theorem 1,
explicit quadratic extensions $F/\Q$ whose class groups contain 
a cyclic subgroup $C_n$ of order $n$ are constructed. 
Fix such an $F$ with $n=d[K:\Q]$. Let $L=KF$. 
Using the norm map 
$\mathrm{Cl}(L)\to \mathrm{Cl}(F)$, we see that the kernel of the
canonical map $\mathrm{Cl}(F)\to \mathrm{Cl}(L)$ is annihilated by $[K : \Q]$, 
so the image of $C_n$ in $\mathrm{Cl}(L)$ has order divisible by $d$
and we are done.
\end{proof} 

\newcommand{\disc}{\mathrm{disc}}

\begin{lemma}\label{d-th-power} Let 
$$ E_0 : \quad y^2+Q(x)y =P(x) $$
be a Weierstrass equation of a hyperelliptic curve $C_0$ of genus $g_0$ over
some field $K$ with
$4P(x)+Q(x)^2=a_0x^{2g_0+2}+\dots + a_{2g_0+2}$ and 
$a_0a_{2g_0+2}\ne 0$ (the latter condition means that the covering $C_0\to \mathbb
P^1_K$ is unramified above the zero and pole of $x$). Let $\alpha\in
K^*$ and let $d\ge 1$ be an integer prime to the characteristic of $K$.
Then 
  $$ E_d : \quad y^2 + Q(\alpha x^d) y = P(\alpha x^d) $$
is a Weierstrass equation of a hyperelliptic curve of genus
$g=d(g_0+1)-1$, and the discriminants of the above equations satisfy the
relation 
$$
\Delta(E_d)=\pm
(a_0a_{2g_0+2})^{d-1}\alpha^{(2g_0+2)((2g_0+2)d-1)}d^{2g+2}
\Delta(E_0)^d. 
$$
\end{lemma}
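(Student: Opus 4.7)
The argument splits into a geometric verification that $E_d$ defines a smooth hyperelliptic curve of genus $g = d(g_0+1)-1$, and an algebraic discriminant identity.

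For the geometric part, I would realise the curve $C_d$ cut out by $E_d$ as the fibre product $C_0 \times_{\PP^1_u} \PP^1_x$, where $u = \alpha x^d$. Since $d$ is prime to $\mathrm{char}(K)$, the map $\PP^1_x \to \PP^1_u$ is \'etale outside $\{0,\infty\}$, while the hypothesis $a_0 a_{2g_0+2}\neq 0$ says $C_0\to \PP^1_u$ is \'etale above $0$ and $\infty$. Hence at every point of $\PP^1_u$ at least one of the two maps is \'etale, so $C_d$ is smooth over $K$; it is geometrically irreducible since $K(C_0)\not\subseteq K(x)$ (as $g_0 \ge 1$). Riemann--Hurwitz applied to the degree-$d$ cyclic cover $C_d\to C_0$, ramified only with index $d$ at the four points above $0,\infty\in\PP^1_u$, yields $2g-2 = d(2g_0-2)+4(d-1)$, whence $g = d(g_0+1)-1$.

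For the discriminant identity, set $n = 2g_0+2$ and $F_0 = 4P+Q^2$. The polynomial governing $E_d$ is $F_d := F_0(\alpha x^d)$, of degree $nd = 2g+2$ and leading coefficient $A = a_0\alpha^n$. Both $\Delta(E_0)$ and $\Delta(E_d)$ are, up to a universal constant $C(g)$ depending only on the genus (because the polynomial discriminants $\disc_{2g+2}(4P+Q^2)$ satisfy the same transformation law under admissible changes of variable as in \cite{L_Tr}), the polynomial discriminants $\disc_n(F_0)$ and $\disc_{nd}(F_d)$; the relation $g+1 = d(g_0+1)$ then forces $C(g)/C(g_0)^d = 1$, so it suffices to prove
$$\disc_{nd}(F_d) = \pm d^{nd}(a_0a_{2g_0+2})^{d-1}\alpha^{n(nd-1)}\disc_n(F_0)^d.$$
Factor $F_0 = a_0\prod_{i=1}^n(x-\beta_i)$ over $\overline{K}$ and choose $\gamma_i$ with $\gamma_i^d = \beta_i/\alpha$, so the roots of $F_d$ are $\{\zeta\gamma_i : 1\le i\le n,\ \zeta\in\mu_d\}$. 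Writing $\disc_{nd}(F_d) = \pm A^{2nd-2}\prod_{(i,\zeta)\ne(j,\zeta')}(\zeta\gamma_i - \zeta'\gamma_j)$, I would split the inner product into a diagonal piece ($i=j$), which contributes $d^{nd}\bigl(\prod_i\gamma_i\bigr)^{d(d-1)}$ via the elementary identity $\prod_{\zeta\ne\zeta'\in\mu_d}(\zeta-\zeta') = d^d$, and an off-diagonal piece ($i\ne j$, any $\zeta,\zeta'$), which contributes $\bigl(\prod_{i\ne j}(\gamma_i^d - \gamma_j^d)\bigr)^d$ via $\prod_{\zeta'\in\mu_d}(T-\zeta'\gamma_j) = T^d - \gamma_j^d$. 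Substituting $\gamma_i^d = \beta_i/\alpha$, $\prod_i\beta_i = a_{2g_0+2}/a_0$, and $\prod_{i\ne j}(\beta_i-\beta_j) = \pm a_0^{2-2n}\disc_n(F_0)$, the exponents collapse to the claimed formula.

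The main obstacle is the exponent bookkeeping. The target $\alpha$-power $n(nd-1) = (2g_0+2)((2g_0+2)d-1)$ emerges only after the cancellation $n(2nd-2)-n(d-1)-n(n-1)d = n(nd-1)$ combining contributions from $A^{2nd-2}$, the diagonal, and the off-diagonal; the $a_0$-powers similarly collapse from $(2nd-2)-(d-1)-(2n-2)d = d-1$. One must also verify that the genus-dependent constant $C(g)$ between $\Delta$ and $\disc_{2g+2}$ is compatible with the relation $g+1=d(g_0+1)$ from Part~(i), so that no stray power of $2$ survives in the ratio $\Delta(E_d)/\Delta(E_0)^d$. All combinatorial signs from the discriminants and root re-orderings are absorbed in the $\pm$.
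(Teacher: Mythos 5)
Your proof is correct and follows essentially the same route as the paper: both reduce the statement, via the genus-only normalization relating $\Delta$ to $\mathrm{disc}_{2g+2}(4P+Q^2)$ (compatible with $g+1=d(g_0+1)$), to the polynomial identity $\mathrm{disc}(F(\alpha x^d))=\pm(a_0a_n)^{d-1}\alpha^{n(nd-1)}d^{nd}\,\mathrm{disc}(F)^d$, proved by exactly the diagonal/off-diagonal computation with the roots $\zeta\gamma_i$ that you carry out, and your exponent bookkeeping checks out. The only cosmetic differences are that the paper first reduces to $\alpha=1$ by substituting $\alpha^{1/d}x$ for $x$ and treats the genus claim as immediate from the degree and separability of $4P(\alpha x^d)+Q(\alpha x^d)^2$, whereas you keep $\alpha$ throughout and obtain the genus by a fibre-product and Riemann--Hurwitz argument.
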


\begin{proof} (See \cite{L_Tr}, \S 2, for the definition of $\Delta(E)$.) This amounts to proving the following fact: if
  $F(x)=a_0x^{n}+\cdots + a_n \in K[x]$ has degree $n$ and $\alpha\in K^*$, then
  $$\disc(F(\alpha x^d))= \pm
  (a_0a_n)^{d-1}\alpha^{n(nd-1)}d^{nd}\disc(F(x))^d.$$
First by substituting $\alpha^{1/d}x$ to $x$, we are reduced to the
case $\alpha=1$. A direct computation of $\disc(F(x^d))$ in terms of
the product of the differences of the roots of $F(x^d)$ in some 
algebraic closure of $K$ leads to the desired equality. 
\end{proof}

\end{section}

\end{document}